\theoremstyle{definition}
\newtheorem{theorem}{Theorem}[section]
\newtheorem{definition}[theorem]{Definition}
\newtheorem{proposition}[theorem]{Proposition}
\newtheorem{lemma}[theorem]{Lemma}
\newtheorem{remark}[theorem]{Remark}
\newtheorem{conjecture}[theorem]{Conjecture}
\def\address#1#2{\begingroup
	\noindent\parbox[t]{7.8cm}{%
		\small{\scshape\ignorespaces#1}\par\vskip1ex
		\noindent\small{\itshape E-mail address}%
		\/: #2\par\vskip4ex}\hfill%
\endgroup}%
\title{Uniform K-stability and Conformally K\"{a}hler, Einstein-Maxwell geometry on toric manifolds}
\author{Liu Yaxiong}
\begin{document}
     	
\maketitle

\begin{abstract}
Conformally K\"{a}hler, Einstein-Maxwell metrics and $f$-extremal metrics are generalization of canonical metrics in K\"{a}hler geometry.
We introduce uniform K-stability for toric K\"{a}hler manifolds,
and show that uniform K-stability is necessary condition for the existence of $f$-extremal metrics on toric manifolds.
Furthermore, we show that uniform K-stability is equivalent to properness of relative K-energy.
\end{abstract}

\tableofcontents

\section{Intrduction}
The subject of this paper is studying a special class of (non-K\"{a}hler in general) Hermitian metrics $\tilde{g}$ on a compact K\"{a}hler manifold $(M, J)$ of complex dimension $m\geq 2$.
Following \cite{AV-MG},
\begin{definition}
	A Hermitian metric $\tilde{g}$ on $(M,J)$ is called  \textit{conformally K\"{a}hler, Einstein-Maxwell} metric (cKEM metric for short) if there exist a smooth positive function $f$ on $M$ such that $g:=f^2\tilde{g}$ is a K\"{a}hler metric, and 
	\begin{flalign}
	&& \mathrm{Ric}^{\tilde{g}}(J\cdot, J\cdot) 
	& =\mathrm{Ric}^{\tilde{g}}(\cdot, \cdot), 
	&  \label{Killing condi} \\
	&& s_{\tilde{g}}  
	& = const,  
	&  \label{weit csc condi}
	\end{flalign}
	where $\mathrm{Ric}^{\tilde{g}}$ and $s_{\tilde{g}}$ denote the Ricci tensor and the scalar curvature of $\tilde{g}$.
\end{definition}
It is known that conformally K\"{a}hler, Hermitian metrics $\tilde{g}$ satisfying (\ref{Killing condi}) and(\ref{weit csc condi}) are in one-to-one correspondence with K\"{a}hler metrics $g$ which admit a Killing vector field $K$ with a positive Killing potential $f$ satisfying 
\begin{equation}
\label{weit csc}
s_{\tilde{g}}
=2 \dfrac{2 m-1}{m-1} f^{m+1} \Delta_{g}\left(\dfrac{1}{f}\right)^{m-1}+s_{g} f^{2}
= const ,
\end{equation} 
where $s_g$ is scalar curvature of $g$. 

Lebrun \cite{LC} proved that cKEM metric is a solution of Einstein-Maxwell equation in General Relativity when $\dim_{\mathbb{R}}=4$.
In \cite{AV-MG}, Apostolov-Maschler initiated a study of cKEM metric in a framework similar to the K\"{a}hler geometry, and set the existence problem of cKEM metrics in the Fujiki-Donaldson picture \cite{DS2}, \cite{FuA}.
In particular, they defined an obstruction generalizing Futaki invariant \cite{FA} in a fixed K\"{a}hler class for the existence of cKEM metric.
Futaki-Ono \cite{FO} and Lahdili \cite{LA18} extended independently the Licherowicz-Matsushima reductiveness theorem to the cKEM manifolds.
A number of recent existence results appear in \cite{FO1}, \cite{FO}, \cite{KT}, \cite{LA18}, \cite{deSou}. 

In \cite{FO}, Futaki-Ono give a definition of $f$-extremal K\"{a}hler metric as critical point of weighted Calabi functional, generalized the Calabi's extremal metric.
We can naturally extend the definitions of Futaki-Mabuchi bilinear form and weighted extremal vector field for $f$-extremal metric, see section \ref{Preliminaries}, also Lahdili \cite{LA18}.
~\\

In K\"{a}hler geometry, the existence of canonical metrics is conjectured that it is equivalent to a subtle stability condition of the underlying manifold in the sense of Mumford’s geometric invariant theory.
This is the so-called Yau-Tian-Donaldson conjecture, formulated as follows
\begin{conjecture}(Yau \cite{YS2}, Tian \cite{TG}, Donaldson \cite{DS})
	
	The polarized manifold $(M, L)$ should admit a cscK metric in the class $c_1(L)$ iff $(M, L)$ is K-polystable.
\end{conjecture}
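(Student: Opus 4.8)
Since the final statement is the Yau-Tian-Donaldson conjecture in full generality, the honest plan is to split it into its two implications and to acknowledge at the outset that only the first is known unconditionally.

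The forward implication, that a cscK metric in $c_1(L)$ forces K-polystability, I would establish through the Mabuchi K-energy $\mathcal{M}$ on the space of K\"{a}hler potentials. The plan is: first, recall that cscK metrics are exactly the critical points of $\mathcal{M}$; second, invoke the convexity of $\mathcal{M}$ along finite-energy geodesics (Berman-Berndtsson, building on Chen), so that the existence of a critical point makes $\mathcal{M}$ bounded below and, modulo automorphisms, coercive; third, attach to each test configuration a geodesic ray and identify the asymptotic slope of $\mathcal{M}$ along it with the Donaldson-Futaki invariant $\mathrm{DF}$, using the slope formulas of Phong-Ross-Sturm and the non-Archimedean reformulation of Boucksom-Hisamoto-Jonsson. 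Boundedness below of $\mathcal{M}$ then yields $\mathrm{DF} \geq 0$, and the rigidity of the equality case upgrades this to polystability. The delicate points here are the regularity of the geodesic rays and the behaviour of the energy across the singular central fibre.

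The reverse implication, that K-polystability implies existence, is the genuine obstacle and is open in general. My plan would be to pass first to the \emph{uniform} strengthening of stability, which is conjecturally equivalent to the coercivity of $\mathcal{M}$, and then to obtain the cscK metric as a minimizer of this coercive functional by the variational and continuity methods of Tian, Chen-Donaldson-Sun in the K\"{a}hler-Einstein case, and Darvas-Rubinstein, upgrading the minimizer to a smooth solution via a priori estimates and, in the Fano case, a partial $C^0$-estimate controlling the Gromov-Hausdorff limits. In the toric setting relevant to this paper the difficulty becomes tractable: following Donaldson, everything reduces to a real Monge-Amp\`ere equation on the moment polytope, $\mathcal{M}$ becomes an explicit convex functional on convex functions, and uniform K-stability becomes the positivity of a linear functional on that space. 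This is precisely the route that renders the conjecture accessible and that the remainder of the paper develops in the weighted, conformally K\"{a}hler Einstein-Maxwell generalization.
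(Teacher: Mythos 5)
The statement you were asked to prove is labelled a \emph{conjecture} in the paper, and the paper contains no proof of it; it is the Yau--Tian--Donaldson conjecture, quoted only as motivation. So there is no ``paper's own proof'' to compare against, and your submission should be judged simply on whether it proves the statement. It does not, and you say so yourself: the reverse implication (K-polystability implies existence of a cscK metric) is open in general, and your plan for it is a research programme --- pass to uniform stability, establish coercivity of the K-energy, extract and regularize a minimizer --- each step of which is either conjectural or known only in special cases (Fano via Chen--Donaldson--Sun, or under the uniform hypothesis via Chen--Cheng, which is a strictly stronger assumption than K-polystability). Even the forward implication as you sketch it leaves the genuinely hard points (regularity of geodesic rays, the slope formula across a singular central fibre, upgrading semistability to polystability) as named difficulties rather than resolved steps. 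An outline that correctly identifies where the difficulties lie is not a proof of a statement whose difficulty is precisely those points.

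For what it is worth, the paper's actual theorem (Theorem \ref{uniformly}) is a far more modest cousin of the forward direction: in the toric, weighted setting, existence of a solution of the weighted Abreu equation implies uniform K-stability of $(\triangle,\mathbf{L},f)$. Its proof is entirely elementary and avoids the geodesic/K-energy machinery you invoke: one uses the integration-by-parts identity of Lemma \ref{integ by part} to write $\mathcal{F}_{(\triangle,f)}(u)=\int_{\triangle}\frac{v^{ij}}{f^{2m-1}}u_{,ij}\,d\mu$ for the solution $v$, then argues by contradiction with a degenerating sequence of normalized convex functions, controlling it through one-dimensional Monge--Amp\`ere measures of restrictions to segments. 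Your final paragraph correctly gestures at this toric reduction, but the statement you were given is the full conjecture, and that remains unproved by your proposal just as it is unproved in the paper.
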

Unfortunately examples in \cite{ACGT2} show that positivity of the Futaki invariant for algebraic test-configurations may not be enough to ensure the existence of a cscK metric. 
Sz\'{e}kelyhidi introduce a more strong concept of uniform stability. 
It becomes a candidate for the stability criterion of existence of cscK metrics.

In \cite{LA19}, Lahdili introduce the more general concept of constant weighted scalar curvature K\"{a}hler, and introduce weighted K-stability by giving an algebro-geometric definition of weighted Donaldson-Futaki invariant. 
We can consider the generalized Yau-Tian-Donaldson conjecture for weighted case.

The problem of searching canonical metrics may become simpler if the manifold admits more symmetry.
Hence, it is natural to consider the toric manifolds.
Each toric manifold $M^{2m}$ can be represented by a Delzant polytope $\triangle$ in $\mathbb{R}^m$ (see \cite{GV}), the equation for the extremal metrics becomes a real $4$th order equation on $\triangle$ via Abreu's formula of scalar curvature (see \cite{AM}), it is known as the Abreu equation.
In a series of papers, Donaldson initiated a program to study the cscK metric and extremal metrics on toric manifolds.
The problem is to solve the equation under certain necessary stability conditions of the pair $(\triangle, A)$ for some function $A$ on $\triangle$.
The formulation of stability becomes more elementary when we consider the toric manifolds.

Thus, in this paper, we focus on toric K\"{a}hler manifolds, consider the existence of $f$-extremal metrics.
As following the classical case (see \cite{Ch-Li-Sh}), we can define the (weighted) uniform K-stability.
Our main theorem is the easy direction of Yau-Tian-Donaldson type correspondence,

\begin{theorem}(see Theorem \ref{uniformly})
	If the weighted Abreu equation (\ref{weight Abreu eq}) has a solution in $\mathcal{S}$, then 
	($\triangle, \mathbf{L}, f$) is uniformly K-stable.
\end{theorem}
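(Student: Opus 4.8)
The plan is to reduce uniform K-stability to a single coercivity inequality for a linear functional on the cone of convex functions over the polytope $\triangle$, and then to extract that inequality from the solution of the weighted Abreu equation (\ref{weight Abreu eq}) by a weighted integration by parts. Recall that toric (special) test configurations are encoded by rational piecewise-linear convex functions on $\triangle$, so by homogeneity and density it suffices to work with the cone $\mathcal{C}$ of normalized continuous convex functions. Uniform K-stability of $(\triangle,\mathbf{L},f)$, with respect to the chosen $f$-weighted norm $\|\cdot\|$, then amounts to the existence of $\lambda>0$ with $\mathcal{L}_f(\psi)\ge\lambda\,\|\psi\|$ for every $\psi\in\mathcal{C}$, where $\mathcal{L}_f$ is the weighted (relative) Donaldson--Futaki functional
\[ \mathcal{L}_f(\psi)=\int_{\partial\triangle}\psi\,d\sigma_f-\int_{\triangle}A_f\,\psi\,d\mu_f. \]
Here $d\sigma_f=v\,d\sigma$ and $d\mu_f=v\,dx$ are the $f$-weighted boundary and interior measures of Section \ref{Preliminaries}, with $v=v(x)>0$ a fixed power of the Killing potential $f$ (which is an affine, strictly positive function of the moment coordinates on $\triangle$), and $A_f$ is the extremal affine function. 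The defining property of $A_f$ is precisely that $\mathcal{L}_f$ annihilates affine functions, which is what makes it comparable with a norm that also kills affine functions.

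The central step is the weighted analogue of Donaldson's identity. If $\varphi\in\mathcal{S}$ solves the weighted Abreu equation, the associated metric has weighted scalar curvature equal to $A_f$, i.e.\ in symplectic-potential form
\[ -\sum_{i,j}\frac{\partial^2}{\partial x_i\,\partial x_j}\bigl(v\,\varphi^{ij}\bigr)=A_f\,v, \]
where $(\varphi^{ij})$ is the inverse Hessian of $\varphi$. Integrating against a convex test function $\psi$ and integrating by parts twice, using that $v$ is smooth and positive up to $\partial\triangle$ together with the Guillemin boundary behaviour of $\varphi$ to convert the boundary contribution into $\int_{\partial\triangle}\psi\,d\sigma_f$, yields
\[ \mathcal{L}_f(\psi)=\int_{\triangle}\sum_{i,j}v\,\varphi^{ij}\,\psi_{,ij}\,dx=\int_{\triangle}\sum_{i,j}\varphi^{ij}\,\psi_{,ij}\,d\mu_f. \]
Since $(\varphi^{ij})$ is positive definite on the interior, $v>0$, and $(\psi_{,ij})\ge 0$ by convexity, the right-hand side is nonnegative; this already gives K-semistability, with equality forcing $\psi$ to be affine.

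To upgrade semistability to uniform stability I would bound the bulk integral below by $\lambda\,\|\psi\|$. Because $f$ is affine and strictly positive on the compact $\triangle$, the weight $v$ is pinched between two positive constants, so it only changes universal constants and the problem reduces to the unweighted coercivity estimate
\[ \int_{\triangle}\sum_{i,j}\varphi^{ij}\,\psi_{,ij}\,dx\ \ge\ \lambda\,\|\psi\|. \]
After subtracting an affine function I may assume $\psi\ge 0$ with a fixed normalization (say $\psi$ vanishing at the barycenter), so that $\|\psi\|$ is comparable to $\int_{\partial\triangle}\psi\,d\sigma$. The estimate is then approached by the standard reduction to the extreme rays of the cone $\mathcal{C}$, i.e.\ the crease functions $\max\{0,\ell\}$ for rational affine $\ell$, on which the bulk term is strictly positive, followed by a compactness and continuity argument on the slice $\{\|\psi\|=1\}$ to produce a single $\lambda>0$.

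The main obstacle is precisely this last coercivity step. The difficulty is that $(\varphi^{ij})$ degenerates in the normal direction along $\partial\triangle$ by the Guillemin boundary condition, while the norm $\|\psi\|$ is concentrated on $\partial\triangle$; a crude uniform lower bound on $(\varphi^{ij})$ is therefore unavailable, and one must exploit the exact rate of degeneration together with the facet structure of $\triangle$ to see that the boundary mass of $\psi$ is genuinely controlled by the bulk integral. The delicate point is the uniformity of $\lambda$ as the crease hyperplane approaches the boundary or passes near a vertex, where both $(\varphi^{ij})$ and the boundary measure degenerate; controlling this limit, rather than the elementary positivity, is the technical heart of the argument, and is where the toric combinatorics of the weighted boundary measure $d\sigma_f$ enter essentially.
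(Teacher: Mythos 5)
Your setup coincides with the paper's: the identity $\mathcal{F}_{(\triangle,f)}(\psi)=\int_{\triangle}\frac{\varphi^{ij}}{f^{2m-1}}\psi_{,ij}\,d\mu$ obtained by the weighted integration by parts is exactly the paper's equation (\ref{Fut if exist solu}), and the immediate consequence is $K$-semistability with equality only for affine $\psi$. (A small bookkeeping slip: the boundary and interior weights are different powers of $f$, namely $f^{-(2m-1)}$ on $\partial\triangle$ and $f^{-(2m+1)}$ on $\triangle$, not a single $v$ for both; this does not affect the identity.) However, you have not proved the theorem: you explicitly leave open the step from semistability to \emph{uniform} stability, and the route you sketch --- restricting to crease functions $\max\{0,\ell\}$ and then invoking ``compactness and continuity'' on the slice $\{\|\psi\|=1\}$ --- does not work as stated. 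That slice is an infinite-dimensional, non-compact set in any topology for which $\mathcal{F}_{(\triangle,f)}$ is continuous, and even on crease functions one must rule out the infimum being $0$ as the crease hyperplane degenerates toward $\partial\triangle$; asserting that this is ``the technical heart'' without resolving it is precisely the gap.

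The paper closes this gap by contradiction rather than by a direct lower bound, and in a way that entirely sidesteps the boundary degeneration of $\varphi^{ij}$ that worries you. Suppose uniform stability fails; normalize a violating sequence by $\int_{\partial\triangle}u^{(k)}f^{-(2m-1)}d\sigma=1/2$ with $\mathcal{F}_{(\triangle,f)}(u^{(k)})\to 0$, so that $\int_{\triangle}s_{(\triangle,\mathbf{L},f)}\,u^{(k)}f^{-(2m+1)}d\mu\to 1$. Donaldson's compactness for convex functions with bounded boundary integral gives a subsequence converging locally uniformly on $\triangle^{0}$ to some $u$. The key quantitative input (Lemma \ref{lower bound}) is that if the one-dimensional Monge--Amp\`ere measure of $u$ restricted to \emph{any interior} segment $I\Subset\triangle$ has positive mass $m$, then $\mathcal{F}_{(\triangle,f)}(u^{(k)})\ge \tau m>0$ for large $k$ --- here only an interior lower bound on the eigenvalues of $\varphi^{ij}/f^{2m-1}$ on a compact neighbourhood of $I$ is needed, so the Guillemin boundary degeneration never enters. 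Hence all sectional Monge--Amp\`ere measures of $u$ vanish, forcing $u$ to be affine and, after normalization, identically zero; then $\int_{\triangle}s_{(\triangle,\mathbf{L},f)}\,u^{(k)}f^{-(2m+1)}d\mu\to 0$, contradicting the limit $1$. Note that the escaping boundary mass is detected through the \emph{interior} integral against the extremal affine function, not through the boundary norm itself --- this is the idea your proposal is missing.
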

On the other hand, as a consequence of uniform K-stability, we have the properness of relative K-energy. In fact,  they are equivalent, which can be stated as follows:
\begin{proposition}(see Proposition \ref{unif equi proper})
	For any $\lambda>0$, the following are equivalent:
	\begin{enumerate}[(i)]
		\item 
		$\mathcal{F}_{(\triangle,f)}(u)\geq \lambda\|u\|_b$ for all $u\in \widetilde{\mathcal{C}}$, (i.e. ($\triangle, \mathbf{L}, f$) is uniform K-stable);
		\item 
		for all $0\leq \delta<\lambda$, there exists $C_{\delta}$ such that 
		$\mathcal{E}_{(\triangle,f)}(u)\geq \delta \|\pi(u)\|_b+C_{\delta}$ for all $u\in \mathcal{S}$, where $\pi(u)$ is the projection of $u$ to the space $\widetilde{\mathcal{S}}:=\widetilde{\mathcal{C}}\cap\mathcal{S}$,
	\end{enumerate}
    where $\mathcal{F}_{(\triangle,f)}(\cdot)$ is Donaldson-Futaki invariant and $\mathcal{E}_{(\triangle,f)}(\cdot)$ is weighted relative K-energy on the toric K\"{a}hler manifolds, we refer other notations to section \ref{The toric case}.
\end{proposition}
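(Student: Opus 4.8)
The plan is to exploit the fact that on a toric manifold the weighted relative K-energy splits into a nonlinear entropy part and a linear part, the latter being exactly the Donaldson--Futaki invariant. Writing this decomposition as
\begin{equation}
\mathcal{E}_{(\triangle,f)}(u) = \mathcal{N}(u) + \mathcal{F}_{(\triangle,f)}(u), \qquad u\in\mathcal{S},
\end{equation}
where $\mathcal{N}(u) = -\int_{\triangle}\log\det(\mathrm{Hess}\,u)\,d\mu_f$ is the entropy term built from the weighted measure fixed in Section~\ref{The toric case}, I would first record the two invariance properties that make the projection $\pi$ useful: both $\mathcal{E}_{(\triangle,f)}$ and $\mathcal{F}_{(\triangle,f)}$ are unchanged when $u$ is modified in the affine and extremal-field directions that $\pi$ quotients out, so that $\mathcal{E}_{(\triangle,f)}(u)=\mathcal{E}_{(\triangle,f)}(\pi(u))$ and $\mathcal{F}_{(\triangle,f)}(u)=\mathcal{F}_{(\triangle,f)}(\pi(u))$ for every $u\in\mathcal{S}$. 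This reduces both implications to estimates for $v=\pi(u)\in\widetilde{\mathcal{S}}\subset\widetilde{\mathcal{C}}$, following the classical template of \cite{Ch-Li-Sh} in the weighted setting.

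The technical heart of the argument is a single estimate on the entropy term, which I would isolate as a lemma: for every $\epsilon>0$ there is a constant $C_\epsilon$ such that
\begin{equation}
-\epsilon\,\|v\|_b - C_\epsilon \;\leq\; \mathcal{N}(v) \;\leq\; \epsilon\,\|v\|_b + C_\epsilon \qquad\text{for all normalized } v\in\widetilde{\mathcal{S}}.
\end{equation}
The lower bound is the delicate one and is the main obstacle; it expresses that $-\log\det(\mathrm{Hess}\,v)$ grows strictly sublinearly in $\|v\|_b$, and I expect to prove it by a Jensen/concavity estimate controlling $\int_{\triangle}\log\det(\mathrm{Hess}\,v)$ against the $L^1$-norm of $v$, together with the standard toric comparison $\int_{\triangle} v\,dx \leq C\,\|v\|_b$ for normalized convex functions. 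The upper bound is softer: along a strictly convex potential the determinant grows and $\mathcal{N}$ is in fact bounded above, while along piecewise-linear directions the Hessian is supported on a null set and $\mathcal{N}$ stays $O(1)$.

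Granting the lemma, the implication (i)$\Rightarrow$(ii) is immediate: for $u\in\mathcal{S}$ put $v=\pi(u)$, apply invariance to get $\mathcal{E}_{(\triangle,f)}(u)=\mathcal{N}(v)+\mathcal{F}_{(\triangle,f)}(v)$, bound the linear part below by (i) as $\mathcal{F}_{(\triangle,f)}(v)\geq\lambda\|v\|_b$, and bound $\mathcal{N}(v)$ below by the lemma with the choice $\epsilon=\lambda-\delta$. This yields $\mathcal{E}_{(\triangle,f)}(u)\geq(\lambda-\epsilon)\|\pi(u)\|_b-C_\epsilon=\delta\|\pi(u)\|_b+C_\delta$, and the requirement $\epsilon=\lambda-\delta>0$ is exactly why the statement restricts to $\delta<\lambda$.

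For the converse (ii)$\Rightarrow$(i), I would fix a reference potential $u_*\in\mathcal{S}$ and, given a test function $v\in\widetilde{\mathcal{C}}$, run the ray $u_t=u_*+tv$. Using linearity of the Donaldson--Futaki invariant, $\mathcal{F}_{(\triangle,f)}(u_t)=\mathcal{F}_{(\triangle,f)}(u_*)+t\,\mathcal{F}_{(\triangle,f)}(v)$, the homogeneity $\|\pi(u_t)\|_b = t\,\|v\|_b + O(1)$, and the upper bound of the lemma (which forces $\mathcal{N}(u_t)/t\to 0$), I would insert the ray into (ii), divide by $t$, and let $t\to\infty$ to obtain $\mathcal{F}_{(\triangle,f)}(v)\geq\delta\|v\|_b$ for every $\delta<\lambda$, hence $\mathcal{F}_{(\triangle,f)}(v)\geq\lambda\|v\|_b$. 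The remaining point---that (ii) is assumed only for $u\in\mathcal{S}$ while (i) quantifies over all $v\in\widetilde{\mathcal{C}}$---is handled by noting that for continuous convex $v$ the ray $u_t$ already lies in $\mathcal{S}$ (the singular boundary behaviour being carried by $u_*$), and extending to the remainder of $\widetilde{\mathcal{C}}$, such as the piecewise-linear functions coming from toric degenerations, by a standard approximation keeping $\mathcal{F}_{(\triangle,f)}$ and $\|\cdot\|_b$ continuous. This approximation is routine compared with the entropy lower bound, which is where the genuine analytic difficulty concentrates.
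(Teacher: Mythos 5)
Your skeleton --- splitting $\mathcal{E}_{(\triangle,f)}=\mathcal{N}+\mathcal{F}_{(\triangle,f)}$, reducing to $\pi(u)$ by affine invariance of both functionals, and proving (ii)$\Rightarrow$(i) along the ray $u_*+tv$ --- coincides with the paper's, and your converse direction is essentially complete. Note only that the sublinearity of $\mathcal{N}(u_*+tv)$ that you invoke there does not need your entropy lemma: since $v$ is convex, $\mathrm{Hess}(u_*+tv)\geq\mathrm{Hess}(u_*)$, so $\int_{\triangle}\log\det\mathrm{Hess}(u_*+tv)\,d\mu/f^{2m-1}\geq\int_{\triangle}\log\det\mathrm{Hess}(u_*)\,d\mu/f^{2m-1}=O(1)$, which is exactly the monotonicity the paper uses ("the ratio of the determinants is at least one").

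The gap is in (i)$\Rightarrow$(ii). You reduce it to the two-sided entropy estimate $-\epsilon\|v\|_b-C_\epsilon\leq\mathcal{N}(v)\leq\epsilon\|v\|_b+C_\epsilon$ and correctly flag the lower bound as the crux, but you do not prove it, and the Jensen/concavity route you propose fails at the first step: for every $v\in\mathcal{S}$ the Guillemin boundary conditions force $\det\mathrm{Hess}(v)$ to blow up like $\prod_{j}L_j^{-1}$ near $\partial\triangle$ (already for $u_0=\tfrac12\sum_j L_j\log L_j$ in dimension one, $u_0''=\tfrac12\bigl(x^{-1}+(1-x)^{-1}\bigr)$ is not integrable), so $\int_{\triangle}\det\mathrm{Hess}(v)\,d\mu=+\infty$ and Jensen yields $\int_{\triangle}\log\det\mathrm{Hess}(v)\,d\mu\leq|\triangle|\log(+\infty)$, i.e.\ nothing; equivalently, the gradient image of $v$ is all of $\mathbb{R}^m$, so the oscillation bound on $\nabla v$ is unavailable too. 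The paper avoids any direct entropy estimate. It introduces the modified invariant $\mathcal{F}_{(A,f)}$ with $A=s_{u_0,f}$, so that the canonical potential $u_0$ of (\ref{canonical potential}) trivially solves the corresponding weighted Abreu equation; it observes $|\mathcal{F}_{(A,f)}(u)-\mathcal{F}_{(\triangle,f)}(u)|\leq C\|u\|_b$ because $\|\cdot\|_b$ dominates $\|\cdot\|_{L^1}$ by (\ref{bdd by L1}); it uses uniform K-stability to absorb this difference into $\mathcal{F}_{(\triangle,f)}(u)\geq\varepsilon\mathcal{F}_{(A,f)}(u)+\delta\|u\|_b$ with $\delta\uparrow\lambda$; and it then invokes Lemma \ref{energy lower bd}, namely that $\mathcal{E}_{(A,f)}$ is convex with critical point $u_0$ and hence bounded below on $\mathcal{S}$. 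That soft lower bound on $\mathcal{E}_{(A,f)}$ is what replaces your hard entropy lemma. To repair your write-up, either prove the $\epsilon$-sublinear entropy bound by a genuinely different method (e.g.\ splitting $\triangle$ into $\{x:\mathrm{dist}(x,\partial\triangle)\geq\delta\}$ and a boundary collar treated separately), or switch to the comparison-with-$u_0$ argument above.
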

\subsection{Outline}

In section \ref{Preliminaries}, we introduce Futaki invariant from \cite{AV-MG} and definition of $f$-extremal metric from \cite{FO}.
we introduce Futaki-Mabuchi bilinear form and weighted extremal vector field (there are subtle difference from \cite{LA18}), and give a detail of proof.

In section \ref{The toric case}, we recall some elementary facts for toric K\"{a}hler manifolds, and introduce the definition of uniform K-stability for toric case, 
this fits within the general framework of toric cscK metrics developed by S. Donaldson \cite{DS} and B. Chen, A. Li and L. Sheng \cite{Ch-Li-Sh} 
(see (Definition \ref{def uniform} and Theorem \ref{uniformly})).

In section \ref{Proof of Theorem}, we provide a proof of Theorem \ref{uniformly}.

In section \ref{Uniform K-stability and properness of weighted relative K-energy}, we show that uniform K-stability is equivalent to properness of weighted relative K-energy on toric K\"{a}hler manifolds (see Proposition \ref{unif equi proper}).


\section{Preliminaries} 
\label{Preliminaries}
\subsection{Futaki invariant}
We consider a compact K\"{a}hler manifold $(M, J)$, a compact subgroup $G\subset \mathrm{Aut}_r(M, J)$ and fix a K\"{a}hler class  $\Omega$.
Denote by $\mathcal{K}^G_{\Omega}$ the space of $G$-invariant K\"{a}hler metrics $\omega$ in $\Omega$. 
General theory implies that for any $K\in \mathfrak{g}=\mathrm{Lie}(G)$, and any $\omega\in \mathcal{K}^G_{\Omega}$, we have
\[
\iota_K\omega=-df_{K,\omega}
\]
for a smooth function $f_{K,\omega}$, normalized by
\[
\int_M f_{K,\omega} v_{\omega}=a,
\]
where a is a fixed real constant, we denote by $f_{K, \omega, a}$ the unique function.

In what follows, we shall fix the vector field $K$ and, choosing a reference metric
$\omega\in \mathcal{K}^G_{\Omega}$,
a constant $a>0$ such that $f_{K,\omega,a}>0$ on $M$.

For any $\omega\in\mathcal{K}_{\Omega}^G$, denote 
$\tilde{g}_{\omega}(\cdot, \cdot)
:=(1/f^2_{K,\omega, a})\omega(\cdot, J\cdot)$, the corresponding scalar curvature of $\tilde{g}$ is 
\begin{equation}
\label{weighted scalar Kahler}
s_{\tilde{g}, \omega}
=2 \dfrac{2 m-1}{m-1} f^{m+1}_{K,\omega,a} \Delta_{\omega}\left(\dfrac{1}{f^{m-1}_{K,\omega,a}}\right)
+s_{\omega} f^{2}_{K,\omega,a},
\end{equation}
The K\"{a}hler metric $\omega\in\mathcal{K}^G_{\Omega}$ whose weighted scalar curvature $s_{\tilde{g}, \omega}$ is constant correspond to the cKEM metric with conformal factor $1/f^{2}_{K,\omega,a}$.
\begin{theorem}(see \cite{AV-MG})
	Let $(M,J)$ be a compact K\"{a}hler manifold of dimension $m\geq2$,
	$G\subset \mathrm{Aut}_r(M,J)$ a compact subgroup, $K\in\mathfrak{g}=\mathrm{Lie}(G)$ a vector field, and 
	$a>0$ a real constant such that $K$ has a positive Killing potential $f_{K,\omega,a}$ with integral equal to $a$
	with respect to any $\omega\in\mathcal{K}_{\Omega}^G$. Then, for any vector field $H\in\mathfrak{g}$ with Killing potential $h_{H,\omega,b}$ with respect to $\omega$, the integral
	\[
	\int_M s_{\tilde{g},\omega} h_{H,\omega,b} \left(\dfrac{1}{f_{K,\omega,a}} \right)^{2m+1}  v_{\omega}
	\]
	is independent of the choice of $\omega$. In particular,
	\[
	c_{\Omega,K,a}:=
	\dfrac{ \int_M s_{\tilde{g},\omega}  \left(\dfrac{1}{f_{K,\omega,a}} \right)^{2m+1}  v_{\omega} }{ \int_M   \left(\dfrac{1}{f_{K,\omega,a}} \right)^{2m+1}  v_{\omega} }
	\]
	is a constant independent of $\omega\in\mathcal{K}_{\Omega}^G$, and for any vector field $H\in\mathfrak{g}$,
	\begin{equation}
	\label{Futaki inv}
	\mathcal{F}^G_{\Omega,K,a}
	:= \int_M \dfrac{s_{\tilde{g},\omega}-c_{\Omega,K,a} }{f^{2m+1}_{K,\omega,a}} h_{H,\omega,b} v_{\omega},
	\end{equation}
	This is a linear functional called the \textit{Futaki invariant} associated to $(\Omega, G, K, a)$ on $\mathfrak{g}$ which is independent of the choice of $\omega\in\mathcal{K}_{\Omega}^G$ and $b$.
	Furthermore, $\mathcal{F}^G_{\Omega,K,a}$ must vanish if it exists a cKEM metric in $\Omega$.
\end{theorem}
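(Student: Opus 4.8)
The plan is to establish the one substantive assertion—that
\[
I(\omega) := \int_M s_{\tilde g, \omega}\, h_{H,\omega,b}\, f_{K,\omega,a}^{-(2m+1)}\, v_\omega
\]
is independent of $\omega \in \mathcal{K}_\Omega^G$—and then to deduce the remaining claims formally. Since $\mathcal{K}_\Omega^G$ is convex at the level of $G$-invariant K\"ahler potentials, hence path-connected, it suffices to fix a smooth path $\omega_t = \omega + i\partial\bar\partial\phi_t$ in $\mathcal{K}_\Omega^G$, put $\psi := \dot\phi_t$ (a $G$-invariant function), and prove $\frac{d}{dt} I(\omega_t) = 0$.

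First I would assemble the first-variation formulas. With $f = f_{K,\omega,a}$ and $h = h_{H,\omega,b}$ one has $\dot v_\omega = (\Delta_\omega\psi)\,v_\omega$; differentiating $\iota_K\omega = -df$ and using that $K$ is a holomorphic Killing field while $\psi$ is $G$-invariant gives $\dot f = \tfrac12\, d\psi(JK) + c_1$, the constant $c_1$ being fixed by $\frac{d}{dt}\int_M f\,v_\omega = 0$ (the normalisation $\int_M f\, v_\omega = a$); likewise $\dot h = \tfrac12\, d\psi(JH) + c_2$ with $c_2$ fixed by $\int_M h\, v_\omega = b$. The fourth-order input is the classical linearisation of scalar curvature, $\dot s_\omega = -\mathbb{L}_\omega\psi + \tfrac12(ds_\omega, d\psi)$, where $\mathbb{L}_\omega$ is the Lichnerowicz operator; substituting these into (\ref{weighted scalar Kahler}) expresses $\dot s_{\tilde g, \omega}$ as an explicit fourth-order expression in $\psi$ with coefficients built from $f$, $s_\omega$, and their covariant derivatives.

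The heart of the argument is to show that, after inserting all of this into $\frac{d}{dt} I$ and integrating by parts against the weighted volume $d\mu := f^{-(2m+1)}\, v_\omega$, the total derivative collapses to zero. The organising principle is that the top-order part of the linearisation of the weighted scalar curvature is a \emph{weighted Lichnerowicz operator} $\mathbb{L}_f$ which is formally self-adjoint with respect to $d\mu$ and whose kernel consists exactly of the Killing potentials of holomorphic vector fields in $\mathfrak{g}$. Concretely, the fourth-order terms in $\psi$ are transferred onto $h$ by this self-adjointness, while the first- and zeroth-order contributions coming from $\dot f$, $\dot h$, $\dot v_\omega$ and the lower-order part of $\dot s_{\tilde g}$ must reassemble—this is the delicate step—into $\int_M \psi\,(\mathbb{L}_f h)\, d\mu$, up to terms proportional to $c_1, c_2$ that integrate to zero against $d\mu$. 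Since $H$ is holomorphic Killing, $\mathbb{L}_f h = 0$, and the derivative vanishes. I expect this weighted integration by parts to be the main obstacle: the weight $f^{-(2m+1)}$ produces a cascade of correction terms at each integration by parts, and one must verify that they cancel in the \emph{weighted} rather than the unweighted Lichnerowicz identity—precisely where the exponent $2m+1$ and the exact form (\ref{weighted scalar Kahler}) of $s_{\tilde g}$ are used.

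Granting $\omega$-independence of $I(\omega)$, the rest is formal. The case $H = 0$ (so $h$ is the constant $b/\mathrm{Vol}_\Omega$) shows the numerator $\int_M s_{\tilde g}\, f^{-(2m+1)}\, v_\omega$ is $\omega$-independent, while the denominator $\int_M f^{-(2m+1)}\, v_\omega$ is $\omega$-independent because it is the integral of $f^{-(2m+1)}$ against the Duistermaat--Heckman measure of $K$, which depends only on $\Omega$ and $K$; hence $c_{\Omega,K,a}$ is a well-defined constant. Linearity of $\mathcal{F}_{\Omega,K,a}^G$ in $H$ follows from linearity of $H \mapsto h_{H,\omega,b}$ and of the integral, and independence of $b$ holds because changing $b$ shifts $h$ by a constant whose contribution is $\int_M (s_{\tilde g} - c_{\Omega,K,a})\, f^{-(2m+1)}\, v_\omega = 0$ by the definition of $c_{\Omega,K,a}$. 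Finally, if a cKEM metric exists in $\Omega$ it may be taken $G$-invariant, so $s_{\tilde g}$ is constant; that constant necessarily equals $c_{\Omega,K,a}$, whence $s_{\tilde g} - c_{\Omega,K,a} \equiv 0$ and $\mathcal{F}_{\Omega,K,a}^G$ vanishes identically.
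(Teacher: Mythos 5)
First, a point of comparison: the paper does not prove this theorem at all --- it is quoted from Apostolov--Maschler \cite{AV-MG} --- so your attempt can only be judged on its own merits, and on those merits it is a plan rather than a proof. The formal deductions in your last paragraph are fine (the $H=0$ case for the numerator, the Duistermaat--Heckman argument for the denominator, independence of $b$ via the definition of $c_{\Omega,K,a}$, and the vanishing at a cKEM metric), and the overall strategy --- differentiate $I(\omega_t)$ along a path, insert the first-variation formulas, and integrate by parts against the weighted measure $f^{-(2m+1)}v_\omega$ --- is indeed the standard one. But the entire mathematical content of the theorem is concentrated in the step you explicitly defer: you assert that the fourth-order part of the linearisation of $s_{\tilde g,\omega}$ is a weighted Lichnerowicz operator $\mathbb{L}_f$ that is formally self-adjoint for $d\mu = f^{-(2m+1)}v_\omega$, and that the contributions of $\dot f$, $\dot h$, $\dot v_\omega$ and the lower-order part of $\dot s_{\tilde g,\omega}$ ``must reassemble'' into $\int_M \psi\,(\mathbb{L}_f h)\,d\mu$. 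That reassembly is exactly what needs to be proved; it is where the specific exponent $2m+1$ and the precise form of the weighted scalar curvature enter, and nothing in your write-up verifies it. You even flag it yourself as ``the main obstacle.'' As it stands, if the cancellation failed the argument would produce no contradiction with anything you have established, so the proof is incomplete at its central point.

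Two smaller remarks. Your first-variation formula $\dot f = \tfrac12\,d\psi(JK)+c_1$ differs from the one the paper records (namely $\dot f_{K,\omega,a}=-\mathcal{L}_{JK}\dot\phi$ with no additive constant, following Gauduchon); the factor and sign are convention-dependent, but you should check that your chosen representative is actually compatible with the normalisation $\int_M f\,v_\omega = a$ rather than appending a constant $c_1$ whose later disappearance you do not track. And the claim that a cKEM metric in $\Omega$ ``may be taken $G$-invariant'' deserves a sentence: in the intended framework the cKEM metric is by construction of the form $f_{K,\omega,a}^{-2}g_\omega$ for some $\omega\in\mathcal{K}^G_\Omega$, so no averaging argument is needed, but as written your assertion is unsupported.
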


\subsection{Weighted extremal vector field}
\label{Weighted extremal vector field}
Consider the Calabi functional
$\Phi: \mathcal{K}^G_{\Omega}\rightarrow\mathbb{R}$ 
defined by
\begin{equation}
\label{Calabi func 1}
\Phi(g)=\int_M s_{\tilde{g},\omega}^2 \dfrac{v_{\omega}}{f^{2m+1}_{K,\omega,a}}.
\end{equation}
If $g$ is a critical point of $\Phi$, the K\"{a}hler metric $g=\omega J$ is called an $f_{K,\omega,a}$-\textit{extremal} metric.
It follows that $g$ is an $f_{K,\omega,a}$-extremal metric iff $s_{\tilde{g},\omega}$ is Killing potential, due to Lemma 3.1 of \cite{FO}.

We introduce the Futaki-Mabuchi bilinear form and weighted extremal vector field for the $f$-extremal metric. 
Let $(M, g, J, \omega)$ be any (connected) compact K\"{a}hler manifold, fix a compact subgroup $G\subset\mathrm{Aut}_r(M, J)$ and fix a K\"{a}hler class $\Omega$.
Recall for any $K\in\mathfrak{g}=\mathrm{Lie}(G)$ and Killing potential $f_{K, \omega, a}$ with respect to $\omega\in\mathcal{K}^G_{\Omega}$ normalized by 
\[
\int_M f_{K, \omega, a} v_{\omega}=a,
\]
let $\omega_t=\omega_0+ dd^c\phi_t$ be any curve in $\mathcal{K}^G_{\Omega}$, 
the derivative  of $f_{K,\omega,a}$ along $\dot{\phi}$ in $T_g\mathcal{K}^G_{\Omega}$ is given by
\begin{equation}
\label{variation of Killing potential 1}
\dot{f}_{K,\omega,a}=-\mathcal{L}_{JK}\dot{\phi},
\end{equation}
this due to Lemma 4.5.1 of \cite{GP}.
\begin{remark}
	In the Lemma 4.5.1 of \cite{GP}, the function $f_{K,\omega}$ is normalized by
	\[
	\int_M f_{K,\omega} v_{\omega}=0,
	\]
	but this derivative formula is independent of the normalization condition.
\end{remark}
Hence, we have
\begin{equation}
\label{variation of Killing potential 2}
\dot{f}_{K,\omega,a}
=-d\dot{\phi}(JK)
=-g(JK, (d\dot{\phi})^{\sharp})
=g(df_{K,\omega_t,a}, d\dot{\phi}).
\end{equation}
\begin{theorem}
	\label{Futaki Mabuchi Binear form}
	Let $(M,J)$ be a compact K\"{a}hler manifold of dimension $m\geq2$,
	$G\subset \mathrm{Aut}_r(M,J)$ a compact subgroup, $K\in\mathfrak{g}=\mathrm{Lie}(G)$ a vector field, and 
	$a>0$ a real constant such that $K$ has a positive Killing potential $f_{K,\omega,a}$ with integral equal to $a$
	with respect to any $\omega\in\mathcal{K}_{\Omega}^G$. Then, for any vector field $H_i\in\mathfrak{g}$ with Killing potential $h_{H_i,\omega,0}$ with respect to $\omega$, $i=1, 2$,
	\begin{equation}
	\label{Futaki Mabuchi}
	\mathcal{B}^G_{\Omega,K,a}(H_1, H_2)
	=\int_M h_{H_1,\omega,0}h_{H_2,\omega,0} \dfrac{v_{\omega}}{f_{K,\omega,a}^{2m+1}}
	\end{equation}
	is independent of the choice of $\omega\in\mathcal{K}^G_{\Omega}$. 
	This bilinear form is called the \textit{Futaki-Mabuchi bilinear form} associated to $(\Omega, G, K, a)$ (c.f. \cite{Fu-Ma} and \cite{LA18}).
\end{theorem}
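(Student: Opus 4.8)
The plan is to fix two metrics in $\mathcal{K}^G_{\Omega}$, join them by the convex path $\omega_t=\omega_0+dd^c\phi_t$ with $\phi_t$ a $G$-invariant potential (such a path exists and stays in $\mathcal{K}^G_{\Omega}$ because the space of $G$-invariant K\"ahler potentials is convex), and show that the first variation in $t$ of the right-hand side of (\ref{Futaki Mabuchi}) vanishes; by connectedness this gives independence of $\omega$. Writing $w:=f_{K,\omega_t,a}^{-(2m+1)}$ and $h_i:=h_{H_i,\omega_t,0}$, I would differentiate $\mathcal{B}(t)=\int_M h_1 h_2\,w\,v_{\omega_t}$ by the product rule, producing the contributions of $\dot h_1$, $\dot h_2$, $\dot w$, and $\dot v_{\omega_t}$.

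The three ingredients I need are the variation formulas. First, by the cited Lemma 4.5.1 of \cite{GP} (for the integral-zero normalization) together with the manipulation in (\ref{variation of Killing potential 1})--(\ref{variation of Killing potential 2}), one has $\dot h_i=-\mathcal{L}_{JH_i}\dot\phi=g(dh_i,d\dot\phi)$; integrating by parts shows this formula automatically preserves $\int_M h_i\,v_{\omega_t}=0$, so no correction constant is needed. Second, (\ref{variation of Killing potential 2}) gives $\dot f_{K,\omega_t,a}=g(df_{K,\omega_t,a},d\dot\phi)$, whence by the chain rule the weight obeys the \emph{same} type of law, $\dot w=g(dw,d\dot\phi)$. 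Third, the volume form varies by $\dot v_{\omega_t}=(\Delta_{\omega_t}\dot\phi)\,v_{\omega_t}$, where $\Delta_{\omega_t}$ is the Laplacian dual to the gradient pairing, i.e. $\int_M u\,(\Delta_{\omega_t}\psi)\,v_{\omega_t}=-\int_M g(du,d\psi)\,v_{\omega_t}$.

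The computation then closes by one application of the Leibniz rule and one integration by parts: since each of $h_1$, $h_2$, $w$ varies as $g(d(\cdot),d\dot\phi)$, the sum of the first three terms equals $\int_M g\bigl(d(h_1 h_2 w),d\dot\phi\bigr)\,v_{\omega_t}$, while the volume term equals $\int_M h_1 h_2 w\,(\Delta_{\omega_t}\dot\phi)\,v_{\omega_t}=-\int_M g\bigl(d(h_1 h_2 w),d\dot\phi\bigr)\,v_{\omega_t}$, so the two cancel and $\dot{\mathcal{B}}(t)=0$. I expect the main obstacle to be purely one of bookkeeping conventions: one must verify that the normalization of $d^c$ used in $\omega_t=\omega_0+dd^c\phi_t$ makes the Laplacian appearing in $\dot v_{\omega_t}$ \emph{exactly} the adjoint of the pairing $g(d\cdot,d\cdot)$ in (\ref{variation of Killing potential 2}), so that the cancellation is exact with no stray constant, and one must carefully justify the variation of the integral-normalized potential $h_i$ (this is where the holomorphy and Killing property of $H_i$ enter through the \cite{GP} lemma). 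I note that no commutativity among $H_1,H_2,K$ is required, since every step uses only that each field is holomorphic Killing and that the path remains in the fixed class $\Omega$.
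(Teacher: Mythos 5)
Your proposal is correct and follows essentially the same route as the paper: differentiate $\mathcal{B}_t$ along a path $\omega_t=\omega_0+dd^c\phi_t$, use $\dot h_i=g(dh_i,d\dot\phi)$ and $\dot f=g(df,d\dot\phi)$ from (\ref{variation of Killing potential 2}) together with the variation of the volume form, combine the first three contributions by Leibniz into $\langle d(h_1h_2 f^{-(2m+1)}),d\dot\phi\rangle$, and cancel against the Laplacian term by integration by parts. Your bookkeeping of the factor $-(2m+1)$ in $\dot{(f^{-(2m+1)})}$ is in fact cleaner than the paper's displayed computation, which omits that factor in two places that cancel.
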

\begin{proof}
	Let $\omega_t=\omega_0+ dd^c\phi_t$ be any curve in $\mathcal{K}^G_{\Omega}$, for simplicity, denote $h_{i,t}:=h_{H_i, \omega_t, 0}$ and $f_t:=f_{K,\omega_t,a}$.
	The derivative  of $\mathcal{B}_t:=\mathcal{B}^G_{\Omega,K,a}(H_1, H_2)(g_t)$ 
	along $\dot{\phi}$ in $T_g\mathcal{K}^G_{\Omega}$ is given by
	\begin{align*}
	\dfrac{d}{dt}\mathcal{B}_t
	=& \dfrac{d}{dt} \int_M h_{1,t}h_{2,t} \dfrac{v_{\omega_t}}{f_t^{2m+1}}             \\
	=&\int_M \dot{h}_{1,t}h_{2,t} \dfrac{v_{\omega_t}}{f_t^{2m+1}}
	+\int_M h_{1,t}\dot{h}_{2,t} \dfrac{v_{\omega_t}}{f_t^{2m+1}}
	-\int_M h_{1,t}h_{2,t} \Delta_{g_t}\dot{\phi_t} \dfrac{v_{\omega_t}}{f_t^{2m+1}}            \\
	&-\int_M h_{1,t}h_{2,t} \dfrac{\dot{f_t}}{f_t^{2m+2}} v_{\omega_t}   \\
	=& \langle \dfrac{h_{2,t} dh_{1,t}}{f_t^{2m+1}}, d\dot{\phi_t} \rangle
	+\langle \dfrac{h_{1,t} dh_{2,t}}{f_t^{2m+1}}, d\dot{\phi_t}  \rangle
	-\langle \dfrac{h_{1,t}h_{2,t}}{f_t^{2m+1}}, \Delta_{g_t}\dot{\phi_t} \rangle 
	-\langle 
	\dfrac{h_{1,t} h_{2,t}}{f_t^{2m+2}}df_t, d\dot{\phi_t} 
	\rangle       \\
	=& \langle d\left(\dfrac{h_{1,t} h_{2,t}}{f_t^{2m+1}} \right), d\dot{\phi_t} \rangle
	-\langle \dfrac{h_{1,t}h_{2,t}}{f_t^{2m+1}}, \Delta_{g_t}\dot{\phi_t} \rangle     \\
	=&0,
	\end{align*}
	where $\langle\cdot, \cdot \rangle$ denotes the global inner product, and we have used (\ref{variation of Killing potential 2}) for the third equality.
\end{proof}
Notice that $\mathcal{B}^G_{\Omega,K,a}$ is positive definite on $\mathfrak{g}$ by our choice of $f_{K,\omega,a}$.

We restrict our attention to $\mathcal{K}^G_{\Omega}$ for some chosen maximal connected compact subgroup $G$ of $\mathrm{Aut}_r(M, J)$. 
We denote by $\mathfrak{g}\subset\mathfrak{h}_{red}$ the
Lie algebra of $G$: $\mathfrak{g}$ is then the space of hamiltonian Killing vector fields for all metrics $g$ in $\mathcal{K}^G_{\Omega}$, i.e. for any $X\in\mathfrak{g}$, 
$X=J\mathrm{grad}_gh_{X,\omega,0}
=\mathrm{grad}_{\omega}h_{X,\omega,0}$ for some smooth real function $h_{X,\omega,0}$ normalized by 
\[
\int_M h_{X,\omega,0} v_{\omega}=0.
\]
For any chosen metric $g$ in $\mathcal{K}^G_{\Omega}$,
we denote by $P^G_g$ the space of Killing potentials relative to $g$, i.e. the kernel of the fourth order Lichnerowicz operator $\mathbb{L}=\delta\delta D^{-}d$.

For simplicity, we denote $f_g=f_{K,\omega,a}$ as Theorem \ref{Futaki Mabuchi Binear form}.
We denote by $\Pi^G_{g,K,a}$ the orthogonal projecter with respect to the global inner product $\langle\cdot, \cdot \rangle_f$ of $C^{\infty}(M,\mathbb{R})$ onto $P^G_g$, where 
$\langle h_1,h_2 \rangle_f=\int_M h_1h_2\frac{v_{\omega}}{f_g^{2m+1}}$.

For any real function $h$, $\Pi^G_{g,K,a}(h)$ will be called \textit{the Killing part of} $h$ \textit{with
	respect to} $g$.
In particular, for any metric $g$ in $\mathcal{K}_{\Omega}$, $\Pi^G_{g,K,a}(s_{\tilde{g},\omega})$ is called the \textit{Killing part of the weighted scalar curvature of} $g$ and we thus get we get the following
$L^2$-orthogonal decomposition of $s_{\tilde{g},\omega}$:
\begin{equation}
\label{L2 decomposition}
s_{\tilde{g},\omega}=s_{\tilde{g},\omega}^G
+\Pi^G_{g,K,a}(s_{\tilde{g},\omega}),
\end{equation}
where $s_{\tilde{g},\omega}^G$ is $L^2$-orthogonal to $P^G_g$, call it the \textit{reduced weighted scalar curvature} with respect to $G$ and $K$.
It follows that $g$ is $f$-extremal if and only if the reduced weighted scalar curvature $s_{\tilde{g},\omega}^G$ is identically zero.

We now define a vector field $Z^G_{\Omega,K,a}$ in $\mathfrak{g}$ by
\begin{equation}
\label{weighted extremal v f}
\mathcal{F}^G_{\Omega,K,a}(X)
=\mathcal{B}^G_{\Omega,K,a}(X, Z^G_{\Omega,K,a}),
\end{equation}
for any $X$ in $\mathfrak{g}$.
Notice that $Z^G_{\Omega,K,a}$ is well-defined, as $\mathcal{B}^G_{\Omega,K,a}$ is positive definite on $\mathfrak{g}$.
By its very definition, $Z^G_{\Omega,K,a}$ only depends on the K\"{a}hler class $\Omega$ and of the choice of $G$ and $K$.
\begin{definition}
	\label{def of weighted extremal}
	Following the classical case,
	the above $Z^G_{\Omega,K,a}$ is called the \textit{weighted extremal vector field} of $\Omega$ relative to $G$ and $K$. 
	A function $\sigma$ is called the \textit{weighted extremal function} of $\Omega$ relative to $G$ and $K$ if $J\mathrm{grad}_g\sigma$ is weighted extremal vector field of $\Omega$ relative to $G$ and $K$.
\end{definition}
The following theorem extends the classical case of extremal vector field (c.f. Theorem 3.3.3 of \cite{Fubook} and \cite{LA18}).
\begin{theorem}
	\label{weighted extremal vector field}
	For any metric $g$ in $\mathcal{K}^G_{\Omega}$, the weighted extremal vector field $Z^G_{\Omega,K,a}$ is the gradient of the Killing part of the weighted scalar curvature:
	\begin{equation}
	\label{weighted extremal v f 1}
	Z^G_{\Omega,K,a}
	=J\mathrm{grad}_g(\Pi^G_{g,K,a}(s_{\tilde{g},\omega})).
	\end{equation}
	Moreover, the $L^2$-square norm of $\Pi^G_{g,K,a}(s_{\tilde{g},\omega})$ is given by
	\begin{equation}
	\label{L2 norm of Killing part}
	\int_M (\Pi^G_{g,K,a}(s_{\tilde{g},\omega}))^2 \dfrac{v_{\omega}}{f_g^{2m+1}}=E_{\Omega,K,a}^G,
	\end{equation}
	by setting
	\[
	E_{\Omega,K,a}^G
	=c_{\Omega,K,a}^2V_{\Omega,f} +\mathcal{F}^G_{\Omega,K,a}(Z^G_{\Omega,K,a}),
	\]
	where $V_{\Omega,f}=\int_M \frac{v_{\omega}}{f_g^{2m+1}}$.
	In particular, $\int_M (\Pi^G_{g,K,a}(s_{\tilde{g},\omega}))^2 \frac{v_{\omega}}{f_g^{2m+1}}$ is independent of $g$ in $\mathcal{K}^G_{\Omega}$
	and provides the following lower bound for the Calabi functional on $\mathcal{K}^G_{\Omega}$:
	\begin{equation}
	\label{lower bound of Calabi}
	\Phi(g)\geq E_{\Omega,K,a}^G,
	\end{equation}
	with equality if and only if $g$ is $f$-extremal.
\end{theorem}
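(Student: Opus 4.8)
The plan is to mirror the classical Futaki--Mabuchi argument (Theorem 3.3.3 of \cite{Fubook}), working throughout with the weighted global inner product $\langle\cdot,\cdot\rangle_f$ in place of the usual $L^2$ product. Set $\zeta:=\Pi^G_{g,K,a}(s_{\tilde{g},\omega})\in P^G_g$; since $P^G_g$ consists of Killing potentials, $W:=J\mathrm{grad}_g\zeta$ lies in $\mathfrak{g}$, and by non-degeneracy of $\mathcal{B}^G_{\Omega,K,a}$ it suffices to check that $W$ satisfies the defining relation (\ref{weighted extremal v f}), i.e. $\mathcal{F}^G_{\Omega,K,a}(X)=\mathcal{B}^G_{\Omega,K,a}(X,W)$ for every $X\in\mathfrak{g}$. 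First I would rewrite the Futaki invariant (\ref{Futaki inv}) as $\mathcal{F}^G_{\Omega,K,a}(X)=\langle s_{\tilde{g},\omega}-c_{\Omega,K,a},\,h_X\rangle_f$ and record that this pairing is insensitive to the additive normalization of $h_X$, because $\langle s_{\tilde{g},\omega}-c_{\Omega,K,a},1\rangle_f=0$ by the very definition of $c_{\Omega,K,a}$ (this is precisely the ``independent of $b$'' assertion in the Futaki theorem).

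The key reduction is then to pass from $s_{\tilde{g},\omega}$ to its Killing part. Since $h_X\in P^G_g$ and $s_{\tilde{g},\omega}=s^G_{\tilde{g},\omega}+\zeta$ is the $\langle\cdot,\cdot\rangle_f$-orthogonal decomposition (\ref{L2 decomposition}), the reduced part $s^G_{\tilde{g},\omega}$ drops out of the pairing and $\mathcal{F}^G_{\Omega,K,a}(X)=\langle\zeta-c_{\Omega,K,a},h_X\rangle_f$. The crucial observation is that the weighted average of $\zeta$ equals $c_{\Omega,K,a}$: using $1\in P^G_g$ together with orthogonality, $\langle\zeta,1\rangle_f=\langle s_{\tilde{g},\omega},1\rangle_f=c_{\Omega,K,a}V_{\Omega,f}$, so $\zeta-c_{\Omega,K,a}$ is exactly the Killing potential of $W$ normalized to have vanishing weighted average. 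Identifying $\zeta-c_{\Omega,K,a}$ with the potential entering $\mathcal{B}^G_{\Omega,K,a}$ then yields $\mathcal{F}^G_{\Omega,K,a}(X)=\mathcal{B}^G_{\Omega,K,a}(X,W)$, whence $Z^G_{\Omega,K,a}=W=J\mathrm{grad}_g\zeta$, proving (\ref{weighted extremal v f 1}). I expect this identification to be the main obstacle: the bilinear form is written with potentials normalized by $\int_M h\,v_\omega=0$, whereas the weighted inner product singles out the normalization $\langle h,1\rangle_f=0$, and reconciling the two so that no spurious constant survives the pairing is exactly the delicate point where this weighted setting departs from the classical one and from \cite{LA18}; I would resolve it by normalizing every potential consistently in the weighted measure $v_\omega/f_g^{2m+1}$ and invoking the fact that $\mathcal{F}^G_{\Omega,K,a}$ is blind to this choice.

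For the norm formula I would apply Pythagoras for $\langle\cdot,\cdot\rangle_f$ to the orthogonal splitting $\zeta=(\zeta-c_{\Omega,K,a})+c_{\Omega,K,a}$, legitimate since $\langle\zeta-c_{\Omega,K,a},1\rangle_f=0$, obtaining $\int_M\zeta^2\,v_\omega/f_g^{2m+1}=\|\zeta-c_{\Omega,K,a}\|_f^2+c_{\Omega,K,a}^2V_{\Omega,f}$. Taking $X=Z^G_{\Omega,K,a}$ in the previous step and using $\zeta-c_{\Omega,K,a}$ as its weighted-normalized potential gives $\mathcal{F}^G_{\Omega,K,a}(Z^G_{\Omega,K,a})=\|\zeta-c_{\Omega,K,a}\|_f^2$, which is exactly (\ref{L2 norm of Killing part}) with $E^G_{\Omega,K,a}=c_{\Omega,K,a}^2V_{\Omega,f}+\mathcal{F}^G_{\Omega,K,a}(Z^G_{\Omega,K,a})$. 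Metric-independence of this quantity is then automatic: $c_{\Omega,K,a}$ and $V_{\Omega,f}$ are invariants of $(\Omega,K,a)$ by the Futaki theorem, $\mathcal{B}^G_{\Omega,K,a}$ is metric-independent by Theorem \ref{Futaki Mabuchi Binear form}, and $Z^G_{\Omega,K,a}$ depends only on $\Omega,G,K$ by its very definition.

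Finally, the Calabi lower bound follows from the same orthogonal decomposition. Since $\Phi(g)=\int_M s_{\tilde{g},\omega}^2\,v_\omega/f_g^{2m+1}=\|s_{\tilde{g},\omega}\|_f^2$ and $s_{\tilde{g},\omega}=s^G_{\tilde{g},\omega}+\zeta$ with the two summands $\langle\cdot,\cdot\rangle_f$-orthogonal, Pythagoras gives $\Phi(g)=\|s^G_{\tilde{g},\omega}\|_f^2+\|\zeta\|_f^2\geq\|\zeta\|_f^2=E^G_{\Omega,K,a}$, with equality precisely when $s^G_{\tilde{g},\omega}\equiv0$, that is (as recorded just after (\ref{L2 decomposition})) precisely when $g$ is $f$-extremal. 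This establishes (\ref{lower bound of Calabi}) together with its equality case and completes the proof.
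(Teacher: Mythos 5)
Your proof is correct and follows essentially the same route as the paper's: the paper likewise determines the potential of $Z^G_{\Omega,K,a}$ from the defining relation (\ref{weighted extremal v f}) via the orthogonal decomposition (\ref{L2 decomposition}), fixes the additive constant by the weighted normalization $\int_M (\Pi^G_{g,K,a}(s_{\tilde{g},\omega})-b)\,v_\omega/f_g^{2m+1}=0$ (which forces $b=c_{\Omega,K,a}$, i.e.\ your $\zeta-c_{\Omega,K,a}$), and finishes with the same two Pythagoras computations for (\ref{L2 norm of Killing part}) and (\ref{lower bound of Calabi}). The normalization tension you single out --- $\mathcal{B}^G_{\Omega,K,a}$ is defined with potentials satisfying $\int_M h\,v_\omega=0$ while the pairing $\langle\cdot,\cdot\rangle_f$ singles out $\langle h,1\rangle_f=0$ --- is real and is precisely the point the paper's own proof passes over silently (it introduces $h_0$ with the unweighted normalization and then imposes the weighted one), so your explicit resolution by working throughout with potentials normalized against $v_\omega/f_g^{2m+1}$ is, if anything, the more careful treatment.
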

\begin{proof}
	By its very definition, for any $g$ in $\mathcal{K}^G_{\Omega}$, 
	$Z^G_{\Omega,K,a}
	=J\mathrm{grad}_gh_{Z^G_{\Omega,K,a},\omega,0}$ for some real function $h_{Z^G_{\Omega,K,a},\omega,0}$, for simplicity, denoted by $h_0:
	=h_{Z^G_{\Omega,K,a},\omega,0}$.
	Then $h_0$ is determined by
	\[
	\mathcal{B}^G_{\Omega,K,a}
	(Z^G_{\Omega,K,a},X)
	=\int_M h_0 h_{X,\omega,0}
	\dfrac{v_{\omega}}{f_{g}^{2m+1}}
	=\mathcal{F}^G_{\Omega,K,a}(X)
	= \int_M (s_{\tilde{g},\omega}-c_{\Omega,K,a})  h_{X,\omega,0} \dfrac{v_{\omega}}{f^{2m+1}_g},
	\]
	for any $X$ in $\mathfrak{g}$ of Killing potential $h_{X,\omega,0}$. 
	Hence, we have
	\[
	\langle
	h_0-s_{\tilde{g},\omega}+c_{\Omega,K,a}, h_{X,\omega,0}
	\rangle_f=0.
	\]
	It follows that $h_0=\Pi^G_{g,K,a}(s_{\tilde{g},\omega})$ up to an additive constant.
	We thus get (\ref{weighted extremal v f 1}).
	
	We assume that $h_0=\Pi^G_{g,K,a}(s_{\tilde{g},\omega})-b$ for some constant $b$, i.e. 
	\[
	\int_M (\Pi^G_{g,K,a}(s_{\tilde{g},\omega})-b) \dfrac{v_{\omega}}{f_g^{2m+1}}=0.
	\] 
	It follows that
	\begin{align*}
	\mathcal{F}^G_{\Omega,K,a}(Z^G_{\Omega,K,a})
	=& \int_M (s_{\tilde{g},\omega}-c_{\Omega,K,a}) (\Pi^G_{g,K,a}(s_{\tilde{g},\omega})-b) \dfrac{v_{\omega}}{f^{2m+1}_{g}}            \\
	=&\int_M (\Pi^G_{g,K,a}(s_{\tilde{g},\omega}))^2 \dfrac{v_{\omega}}{f_g^{2m+1}}
	-b\int_M(s_{\tilde{g},\omega}-c_{\Omega,K,a}) \dfrac{v_{\omega}}{f_g^{2m+1}}              \\
	&-c_{\Omega,K,a}\int_M \Pi^G_{g,K,a}(s_{\tilde{g},\omega}) \dfrac{v_{\omega}}{f_g^{2m+1}}                 \\
	=&\int_M (\Pi^G_{g,K,a}(s_{\tilde{g},\omega}))^2 \dfrac{v_{\omega}}{f_g^{2m+1}}
	-c^2_{\Omega,K,a}V_{\Omega,f}.
	\end{align*}
	This gives (\ref{L2 norm of Killing part}).
	
	From (\ref{L2 decomposition}), we have
	\begin{align*}
	\Phi(g)=&\int_M s_{\tilde{g},\omega}^2 \dfrac{v_{\omega}}{f_g^{2m+1}}              \\
	=& \int_M (s^G_{\tilde{g},\omega})^2 \dfrac{v_{\omega}}{f_g^{2m+1}}  
	+\int_M |\Pi^G_{g,K,a}(s_{\tilde{g},\omega})|^2 \dfrac{v_{\omega}}{f_g^{2m+1}}             \\
	\geq& \int_M |\Pi^G_{g,K,a}(s_{\tilde{g},\omega})|^2 \dfrac{v_{\omega}}{f_g^{2m+1}}              \\
	=&E_{\Omega,K,a}^G,
	\end{align*}
	for any $g$ in $\mathcal{K}^G_{\Omega}$, with equality if and only if $s^G_{\tilde{g},\omega}\equiv0$,
	hence if and only if $g$ is $f$-extremal.
\end{proof}

\section{The toric case}
\label{The toric case}
\subsection{Weighted scalar curvature formula}
From now on, we consider that $M$ is toric K\"{a}hler manifold, i.e. $G=\mathbb{T}$ is an $m$-dimensional torus with effective action.
Let $\mathfrak{t}:=\mathrm{Lie}(\mathbb{T})$.
We fix a $\mathbb{T}$-invariant K\"{a}hler form $\omega$ in K\"{a}hler class $[\omega]$, a positive Killing potential $f$ for Killing vector field $K$, and vary the K\"{a}hler metrics within 
\[
\mathcal{C}^{\mathbb{T}}_{\omega}
:=\{ \mathbb{T}\text{-invariant, }\omega\text{-compatible complex structures on }(M, \omega, \mathbb{T}) \}.
\]
For any two elements 
$J, J^{\prime}\in \mathcal{C}^{\mathbb{T}}_{\omega}$, they are biholomorphic under a $\mathbb{T}$-equivariant 
diffeomorphism $\Phi$ which acts trivially on the cohomology class of $[\omega]$, i.e. $\Phi^{*}\omega$ is a
K\"{a}hler form in $\mathcal{K}^{\mathbb{T}}_{[\omega]}$ on $(M, J)$,
where
\[
\mathcal{K}^{\mathbb{T}}_{[\omega]}
:=\{\mathbb{T}\text{-invariant K\"{a}hler form in }[\omega] \}.
\]
For any two elements $\omega, \omega^{\prime}\in \mathcal{K}^{\mathbb{T}}_{[\omega]}$, there exist a $\mathbb{T}$-equivariant differmorphism $\Phi$ such that $\Phi^{*}\omega=\omega^{\prime}$ by the equivariant Moser lemma.

As the action of $\mathbb{T}\subset \mathrm{Aut}_r(M)$, then for each $K\in\mathfrak{t}$, $K$ admits a hamiltonian function with respect to $\omega$, it is therefore hamiltonian action.
So one can
use the Delzant description \cite{DT} of ($M, \omega, \mathbb{T}$) in terms of the corresponding momentum
image $\triangle=\mu(M) \subset \mathfrak{t}^{*}$
and a set of non-negative defining affine functions
$\mathbf{L}=(L_1, \cdots, L_d)$
for $\triangle$, defined on the vector space
$\mathfrak{t}^{*}\cong \mathbb{R}^m$. 

Let $\mu$ be the moment map of $\mathbb{T}$ action on $(M, \omega)$. $\triangle:=\mu(M)$, denote $\triangle^0$ the interior of $\triangle$.
We denote by $M^0:=\mu^{-1}(\triangle^0)$ the open dense subset of $M$, and $\mathbb{T}$ acts freely on $M^0$. 
The complex structure $J$ in $\mathcal{C}^{\mathbb{T}}_{\omega}$ and the corresponding K\"{a}hler metrics $g_J$ on $M^0$ can be described by using so-called momentum-angle coordinates, due to V. Guillemin \cite{GV}.
In this description, We fix a basis $\{e_1, \cdots, e_m \}$ of $\mathfrak{t}$ and denote by
$K_j = X_{e_j}$ the induced fundamental vector fields, denote $\{e_1^{*}, \cdots, e_m^{*} \}$ the its dual basis and write 
$x=(x_1, \cdots, x_m)$ for the elements of $\mathfrak{t}^{*}$.
For each point $p\in M^0$, we shall also identify the coordinate function $x_i=\langle x, e_i \rangle$ with the momentum
function $\langle \mu, e_i \rangle$ on $M$.
We know that the action of 
$\mathbb{T}\cong (\mathbb{S}^1)^m$ on $(M, J)$ extends
to an (effective) holomorphic action of the complex torus 
$\mathbb{T}^{\mathbb{C}}\cong (\mathbb{C}^{*})^m$.
Fixing a point $p_0\in M^0$, we can identify $M^0$ with the orbit 
$\mathbb{T}^{\mathbb{C}}(p_0)\cong (\mathbb{C}^{*})^m$. 
Using the polar coordinates $(r_i, t^0_i)$ on each $\mathbb{C}^{*}$, this identification gives rise the so-called \textit{angular coordinates}
\[
\textbf{t}=(t_{1}^{0}, \ldots, t_{m}^{0}): 
M^{0} \rightarrow \mathbb{T}.
\]
The functions 
$\{x_1, \cdots, x_m; t_1, \cdots, t_m \}$ 
on $\triangle^0\times\mathbb{T}$
are called \textit{momentum-angle coordinates} associated to $(g, J)$.
The symplectic 2-form $\omega$ then becomes
\begin{equation}
\label{symp form}
\omega=\sum_{i=1}^{m} dx_{i} \wedge dt_i, 
\end{equation}
whereas the K\"{a}hler metric is
\begin{equation}
\label{riem metric}
g_J=\sum_{i, j=1}^{m}
\left(G_{i j} dx_{i} \otimes dx_{j}
+H_{i j} dt_i \otimes dt_j \right) , 
\end{equation}
the complex structure is 
\begin{equation}
\label{cpx struc} 
J dt_i
=-\sum_{j=1}^{m} G_{ij}(x) dx_j, 
\end{equation}
where $\mathbf{G}=(G_{ij})=\mathrm{Hess}(u)$ for some smooth, strictly convex function $u(x)$ on $\triangle^0$, \textit{called \textbf{symplectic potential}} of $g_J$, and $\mathbf{H}=(H_{ij})=\mathbf{G}^{-1}$.

From the point view of this description, the symplectic potential of the induced canonical toric
K\"{a}hler structure $(g_0, J_0)$ via the Delzant construction is
\begin{equation}
\label{canonical potential}
u_{0}(x)=\dfrac{1}{2}\sum_{j=1}^d L_{j} \log L_{j}.
\end{equation}

In order to extend the K\"{a}hler structure on $M^0$ to $M$, we have the necessary and sufficient condition due to \cite{ACGT}, as follows
\begin{theorem}
	\label{iff cond}
	Let $(M, \omega)$ be a compact toric symplectic $2m$-manifold with momentum map 
	$\mu: M\rightarrow \triangle\subset \mathfrak{t}^{*}$, 
	and $\mathbf{H}$ be a positive definite $S^2\mathfrak{t}^{*}$-valued function on $\triangle^0$. 
	Then $\mathbf{H}$ defines a $\mathbb{T}$-invariant, $\omega$-compatible almost K\"{a}hler metric $g$ on $M$ via
	(\ref{riem metric}) iff it satisfies the following conditions:
	\begin{enumerate}[(i)]
		\item [(i)]
		[smoothness] $\mathbf{H}$ is the restriction to $\triangle^0$ of a smooth $S^2\mathfrak{t}^{*}$-valued function on $\triangle$;
		\item [(ii)]
		[boundary values] for any point $\xi$ on the facet $F_j\subset\triangle$ with inward normal $u_j=d L_j\in\mathfrak{t}$,
		\[
		\mathbf{H}_{\xi}(u_{j}, \cdot)=0 \quad 
		\text { and } \quad
		(d \mathbf{H})_{\xi}(u_{j}, u_{j})=2 u_{j},
		\]
		where the differential $d\mathbf{H}$ is viewed as a smooth 
		$S^2\mathfrak{t}^{*}\otimes \mathfrak{t}$-valued function on $\triangle$;
		\item [(iii)]
		[positivity] for any point $\xi$ in the interior of a face $F\subset \triangle$, $\mathbf{H}_{\xi}(\cdot,\cdot)$ is positive definite when viewed as a smooth function with values in $S^2(\mathfrak{t}/\mathfrak{t}_F)^{*}$. 
		Where we denote $\mathfrak{t}_F\subset\mathfrak{t}$ (for any face $F\subset\triangle$) the vector subspace
		spanned by the inward normals 
		$u_j\in \mathfrak{t}$ to facets containing $F$. Thus the tangent plane to points in the interior $F^0$ of $F$ is the annihilator $\mathfrak{t}_{F}^{0} \cong 
		(\mathfrak{t} / \mathfrak{t}_{F})^{*}$ of $\mathfrak{t}_F$ in $\mathfrak{t}^{*}$.
	\end{enumerate}
\end{theorem}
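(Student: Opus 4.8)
The plan is to reduce this global statement to a purely local analysis along the preimages of the open faces of $\triangle$, and there to compare the metric (\ref{riem metric}) with an explicit flat model. On $M^0=\mu^{-1}(\triangle^0)$ the formula (\ref{riem metric}) manifestly defines a smooth, positive definite, $\omega$-compatible metric for any smooth positive definite $\mathbf{H}$, with complex structure (\ref{cpx struc}); so the only issue is whether it extends smoothly across $\mu^{-1}(\partial\triangle)$. I would stratify $\partial\triangle$ by its open faces $F^0$ and treat extension across $\mu^{-1}(F^0)$ one stratum at a time, organized by the codimension $k=\mathrm{codim}\,F$, the facets $(k=1)$ being the principal case. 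Since the conditions (i)--(iii) are local and phrased face by face, this is legitimate, and \emph{sufficiency} and \emph{necessity} will both follow from the same local computation read in the two directions.

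Near a point of $F^0$ I would choose an adapted basis of $\mathfrak{t}$. Using the Delzant (smoothness) hypothesis, the inward normals $u_{j_1},\dots,u_{j_k}\in\mathfrak{t}$ to the facets meeting along $F$ form part of a $\mathbb{Z}$-basis, so I may assume $u_{j_i}=e_i$ and hence $L_{j_i}=x_i$ (up to constants) for $i\le k$, with $F^0=\{x_1=\cdots=x_k=0\}$ locally; the preimage $\mu^{-1}(F^0)$ is exactly where the subtorus $\mathbb{T}_F$ generated by $e_1,\dots,e_k$ collapses. I then pass to the coordinates $z_i=\sqrt{2x_i}\,e^{\mathrm{i}t_i}$ for $i\le k$, together with the unchanged $(x_{k+1},\dots,x_m,t_{k+1},\dots,t_m)$; in these coordinates $\omega=\sum_{i\le k}\tfrac{\mathrm{i}}{2}dz_i\wedge d\bar z_i+\sum_{i>k}dx_i\wedge dt_i$ is the standard form of $\mathbb{C}^k\times(\text{slice})$, so smoothness of $g$ at $\mu^{-1}(F^0)$ means smoothness as a function of $(z,\bar z)$ across $z_1=\cdots=z_k=0$. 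Substituting $dx_i=\mathrm{Re}(\bar z_i\,dz_i)$ and $dt_i=\mathrm{Im}(dz_i/z_i)$ into (\ref{riem metric}) and collecting terms, the collapsing block contributes $\sum_{i\le k}\big(G_{ii}\,|z_i|^2(dr_i)^2+H_{ii}(d\theta_i)^2\big)+\cdots$, while the flat model $\sum_{i\le k}|dz_i|^2$ has $G_{ii}\sim 1/(2x_i)$ and $H_{ii}\sim 2x_i$. This is precisely what condition (ii) encodes: $\mathbf{H}_\xi(u_j,\cdot)=0$ kills the cross terms that would blow up, and $(d\mathbf{H})_\xi(u_j,u_j)=2u_j$ fixes the leading coefficient $H_{jj}=2x_j+O(x_j^2)$, forcing the $(d\theta_j)^2$ term to assemble into the smooth tensor $|dz_j|^2$ and, via $\mathbf{G}=\mathbf{H}^{-1}$, the $(dr_j)^2$ term to do the same.

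The heart of the argument --- and the step I expect to be the main obstacle --- is the matrix asymptotics relating $\mathbf{G}=\mathbf{H}^{-1}$ to the degenerating $\mathbf{H}$, together with the verification that the per-facet conditions (ii) assemble correctly at corners where several directions collapse simultaneously. Condition (ii) forces $\mathbf{H}$ to have, along $F$, a block structure in which the $\mathbb{T}_F$-directions degenerate to first order, and one must show that inverting this block matrix produces exactly the compensating singular rate $G_{ii}\sim 1/(2x_i)$ in each collapsing direction while leaving the transverse block nonsingular; this is where condition (iii), positive definiteness of $\mathbf{H}$ on $S^2(\mathfrak{t}/\mathfrak{t}_F)^{*}$, is needed, and where condition (i) supplies the smooth Taylor data making the remaining entries smooth functions of $(z,\bar z)$. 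Carrying this out yields sufficiency; for necessity I would run the computation in reverse, restricting a smooth $\omega$-compatible $g$ to $M^0$, reading off $\mathbf{H}$ from (\ref{riem metric}), and extracting (i)--(iii) from the smoothness and nondegeneracy of $g$ across each $\mu^{-1}(F^0)$, the factor $2$ in (ii) arising from the normalization $x_i=\tfrac12|z_i|^2$ of the collapsing $S^1$-moment maps. As a consistency check I would test the whole scheme against the canonical potential (\ref{canonical potential}), whose inverse Hessian $\mathbf{H}_0=\mathrm{Hess}(u_0)^{-1}$ satisfies (i)--(iii) by direct computation and furnishes the reference metric $(g_0,J_0)$.
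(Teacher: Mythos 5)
The paper itself gives no proof of Theorem \ref{iff cond}: it is imported verbatim from Apostolov--Calderbank--Gauduchon--T\o{}nnesen-Friedman \cite{ACGT}, so there is no in-paper argument to compare yours against. Your outline does follow the route taken in that reference (and going back to Guillemin, Abreu and Donaldson): stratify $\partial\triangle$ by open faces, pass to the local model $\mathbb{C}^k\times(\mathbb{C}^*)^{m-k}$ adapted to the collapsing subtorus $\mathbb{T}_F$ via the Delzant normal form $u_{j_i}=e_i$, and test smooth extension of the tensor (\ref{riem metric}) in the coordinates $z_i=\sqrt{2x_i}\,e^{\mathrm{i}t_i}$. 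The dictionary you propose --- condition (ii) $\Leftrightarrow$ the first-order degeneration $H_{jj}=2x_j+O(x_j^2)$ and $\mathbf{H}(u_j,\cdot)=O(x_j)$, condition (iii) $\Leftrightarrow$ nondegeneracy of the transverse block --- is the correct one.

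That said, what you have written is a plan rather than a proof, and you say so yourself: the ``heart of the argument'' --- the asymptotics of $\mathbf{G}=\mathbf{H}^{-1}$ given the degenerating block structure of $\mathbf{H}$, and the verification that the per-facet conditions assemble consistently at a codimension-$k$ corner where several directions collapse simultaneously --- is flagged as the main obstacle but not carried out. This is genuinely the nontrivial content of the theorem: one must show that (ii) and (iii) together force $G_{ii}=\tfrac{1}{2x_i}\bigl(1+O(x)\bigr)$ in each collapsing direction, that the off-diagonal entries of $\mathbf{G}$ coupling collapsing and transverse directions are tame enough that $\sum G_{ij}\,dx_i\otimes dx_j$ combines with $\sum H_{ij}\,dt_i\otimes dt_j$ into a tensor smooth in $(z,\bar z)$ (this needs more than leading-order behaviour: the relevant combinations must be smooth functions of the $z_i\bar z_j$, not merely bounded), and conversely that smoothness of $g$ pins down exactly the normalization constant $2$ and nothing weaker. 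Until that block-matrix computation is written down --- including the mixed terms with $i\le k<j$ that your sketch hides behind ``$+\cdots$'' --- the equivalence is asserted, not proved. Since this is a quoted result, the cleanest course is either to cite \cite{ACGT} for the proof outright, or to execute the local computation in full for a single facet and then explain why the corner case follows blockwise from the same estimates.
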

It follow that, up to a $\mathbb{T}$-equivariant isometry, the space $\mathcal{C}^{\mathbb{T}}_{\omega}$  
can be identified with the function space $\mathcal{S}(\triangle, \mathbf{L})$ 
($\mathcal{S}$ for simplicity)
of all smooth, strictly convex functions $u$ on $\triangle^0$ such that 
$\mathbf{H}^u:=\mathrm{Hess}(u)^{-1}=(u^{ij})$ 
satisfies the conditions of Theorem \ref{iff cond}.

A key feature of this description is
the remarkably simple expression for the scalar curvature of K\"{a}hler metrics, found by Abreu \cite{AM}. 
The scalar curvature $s_J$ of $g_J$ is given by
\begin{equation}
\label{simple scalar}
s_J=-\sum_{i, j=1}^{m} \mathbf{H}_{ij, ij}^{u} 
=-\sum_{i, j=1}^{m}\dfrac{\partial^2 G^{ij}}{\partial x_i\partial x_j},
\end{equation}
where we denote by $\varphi_{,i}$ the partial derivative $\partial\varphi/\partial x_i$ for a smooth function $\varphi$ of $x=(x_1, \cdots, x_m)$, and  $(G^{ij})=(G_{ij})^{-1}$, $G_{ij}=u_{ij}$.
From (\ref{cpx struc}), one has
\begin{flalign}
&&d d_{J}^{c} \varphi
&=d(Jd\varphi)
=d(\varphi_{,j} Jdx_j)
=\sum_{i, j, k=1}^{m}\left(\varphi_{,j} \mathbf{H}_{j k}^{u}\right)_{,i} dx_{i} \wedge d t_{k}     & \nonumber                               \\
&&\Delta_{J} \varphi
&=-\Lambda(dd^c_J\varphi)
=-\sum_{i=1}^m\iota_{(\frac{\partial}{\partial t_i})}
\iota_{(\frac{\partial}{\partial x_i})}dd^c\varphi
& \nonumber                                    \\
&& &=-\sum_{i, j=1}^{m}\left(\varphi_{,i} 
\mathbf{H}_{i j}^{u}\right)_{, j} 
=-\sum_{i, j=1}^{m}\varphi_{, ij}\mathbf{H}_{ij}^{u}
-\sum_{i, j=1}^{m} \varphi_{,i} \mathbf{H}_{i j, j}^{u}
& \label{laplace}
\end{flalign}

From now on, $f$ will be fixed affine function which is positive on $\triangle$ so that pull-back of $f$ by moment map $\mu$ is a Killing potential for any K\"{a}hler metric $g_J$ corresponding to an element $J\in \mathcal{C}^{\mathbb{T}}_{\omega}$. 
In particular, $f_{,ij}=0$ (see Lemma \ref{Killing affine}).  
Thus, for any smooth matrix-valued function
$\mathbf{H}=(\mathbf{H}_{ij})$ on $\triangle$ one computes
\begin{align*}
\sum_{i, j=1}^{m}\left( \dfrac{\mathbf{H}_{ij}}{f^{2 m-1}}\right)_{,i j}
=&\dfrac{1}{f^{2m-1}} \sum_{i, j=1}^{m} \mathbf{H}_{i j, i j}
+\dfrac{2m(2 m-1)}{f^{2m+1}} \sum_{i, j=1}^{m} \mathbf{H}_{i j} f_{, i} f_{, j}                 \\
&-\dfrac{2(2m-1)}{f^{2m+1}} \sum_{i, j=1}^{m} \mathbf{H}_{ij,j} f_{, i}   .
\end{align*}
From (\ref{weit csc}), (\ref{simple scalar}) and (\ref{laplace}) and the above equality, we gets a explicity expression for the scalar curvature $s_{J,f}$ of $(1/f^2)g_J$
\begin{equation}
\label{weight scalar}
\dfrac{s_{J,f}}{f^{2m+1}}
=-\sum^m_{i,j=1}\left(\dfrac{1}{f^{2m-1}}\mathbf{H}^u_{ij} \right)_{,ij}.
\end{equation} 

\subsection{Uniform K-stability on toric manifolds}
Let ($M$, $J$, $\omega$, $\mathbb{T}$) be a toric K\"{a}hler manifold, let ($\triangle, \textbf{L}$) be its momentum polytope, 
the standard Lebesgue measure $d\mu=dx_1\wedge\cdots\wedge dx_m$ on $\mathfrak{t}^{*}\cong\mathbb{R}^m$ and the affine
labels $\mathbf{L}=(L_1, \cdots, L_d)$ of $\triangle$ induce a measure $d\sigma$ on each facet $F_i\subset \partial\triangle$ by letting
\begin{equation}
\label{Lebesgue mea}
dL_i\wedge d\sigma=-d\mu.
\end{equation} 



For any $X\in G=\mathbb{T}$ and $\omega\in\mathcal{K}^G_{\Omega}$, the Killing potential $f_{X,\omega,a}$ is $\mathbb{T}$-invariant by $\mathbb{T}$-equivariant Moser Lemma.
\begin{lemma}
	\label{Killing affine}
	Suppose $(g, J)$ is a $\mathbb{T}$-invariant, $\omega$-compatible K\"{a}hler metric on $(M, \omega, \mathbb{T})$, corresponding to a symplectic potential $u\in\mathcal{S}(\triangle, \mathbf{L})$. 
	Then a smooth function $f$ is Killing potential with respect to $g$ if and only if $f=\mu^{*}\varphi$ for some affine function $\varphi$ on $\triangle$.
\end{lemma}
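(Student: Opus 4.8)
The plan is to pass to the momentum-angle coordinates $(x_1,\dots,x_m;t_1,\dots,t_m)$ on $M^0$ and read off both implications from the explicit shape of $\omega$ and $g_J$ in (\ref{symp form}) and (\ref{riem metric}). For the (easy) ``if'' direction, suppose $\varphi$ is affine, so that $f=\mu^{*}\varphi=\sum_i c_i x_i+c_0$. Using $\omega=\sum_j dx_j\wedge dt_j$ one checks directly that $\iota_{K}\omega=-df$ holds for $K=\sum_i c_i\,\partial/\partial t_i$, so $K$ is a constant-coefficient combination of the fundamental vector fields $\partial/\partial t_i$ generating the $\mathbb{T}$-action. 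Since $g_J$ is $\mathbb{T}$-invariant, i.e. its coefficients $G_{ij},H_{ij}$ depend on $x$ only, each $\partial/\partial t_i$ preserves $g_J$; hence $K$ is Killing and $f$ is a Killing potential.

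For the converse I would first use that the Killing potentials in play are those of vector fields in $\mathfrak{t}=\mathrm{Lie}(\mathbb{T})$, which are $\mathbb{T}$-invariant (as recorded just before the lemma via the equivariant Moser lemma); thus $f$ descends to a function of $x$ alone, $f=\mu^{*}\varphi$ for some smooth $\varphi$ on $\triangle$. Solving $\iota_{K}\omega=-df=-\sum_i\varphi_{,i}\,dx_i$ against $\omega=\sum_j dx_j\wedge dt_j$ forces $K=\sum_j\varphi_{,j}(x)\,\partial/\partial t_j$, with no $\partial/\partial x$-component. I then impose $\mathcal{L}_{K}g_J=0$. Because $K(x_i)=0$ and $K(G_{ij})=K(H_{ij})=0$, the only surviving contributions come from $\mathcal{L}_{K}(dt_i)=d\varphi_{,i}=\sum_k\varphi_{,ik}\,dx_k$, giving $\mathcal{L}_{K}g_J=\sum_{i,j,k}H_{ij}\varphi_{,ik}\,(dx_k\otimes dt_j+dt_j\otimes dx_k)$. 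Vanishing of this mixed term is equivalent to $\sum_i H_{ij}\varphi_{,ik}=0$ for all $j,k$, i.e. $\mathbf{H}\cdot\mathrm{Hess}(\varphi)=0$; since $\mathbf{H}=(H_{ij})$ is positive definite on $\triangle^0$ by Theorem \ref{iff cond}(iii) it is invertible there, forcing $\varphi_{,ik}\equiv0$. Hence $\mathrm{Hess}(\varphi)=0$ on $\triangle^0$, so $\varphi$ is affine, and by continuity the conclusion extends from the dense set $\triangle^0$ to all of $\triangle$.

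The step I expect to require the most care in the converse is the reduction to $\mathbb{T}$-invariant $f$: a priori a Killing field on $(M,g_J)$ need not commute with $\mathbb{T}$, since the full isometry group can be strictly larger, so the statement is really a characterization of the $\mathbb{T}$-invariant Killing potentials intrinsic to the toric problem. Once this reduction is granted, the remaining work is the routine Lie-derivative bookkeeping above, whose only genuine analytic input is the nondegeneracy of $\mathbf{H}$ furnished by the compatibility conditions of Theorem \ref{iff cond}; the translation $\mathbf{H}\cdot\mathrm{Hess}(\varphi)=0\Rightarrow\mathrm{Hess}(\varphi)=0$ is then immediate.
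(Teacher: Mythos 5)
Your proof is correct and takes essentially the same route as the paper's: both reduce to the $\mathbb{T}$-invariant case so that $f=\mu^{*}\varphi$, write the Hamiltonian field as $\sum_j\varphi_{,j}\,\partial/\partial t_j$ in momentum-angle coordinates, and derive the condition $\mathbf{H}\cdot\mathrm{Hess}(\varphi)=0$, concluding from the nondegeneracy of $\mathbf{H}$ on $\triangle^0$. The only (cosmetic) difference is that the paper imposes $\mathcal{L}_{\mathrm{grad}_{\omega}f}J=0$ whereas you compute $\mathcal{L}_{K}g_J=0$ directly; for a field preserving $\omega$ these are equivalent and yield the same linear-algebra obstruction.
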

\begin{proof}
	As $f$ is $\mathbb{T}$-invariant, then it is the pull back by the moment map of a smooth function $\varphi(x)$ on $\triangle$.
	It thus follows from (\ref{symp form}) that on $M$,
	$\mathrm{grad}_{\omega}f=\sum_i\varphi_{,i}K_i$.
	Since each $K_i$ preserves $J$, the condition $\mathcal{L}_{\mathrm{grad}_{\omega}f}J=0$ reads as
	\begin{align*}
	0=JK_j\cdot \varphi_{,i}=(d\varphi_{,i})(JK_j)
	=&-\sum_{i,k}\varphi_{, ik}Jdx_k(K_j)
	=-\sum_{i,k,l}\varphi_{, ik}H_{kl}\theta_l(K_j)  \\
	=&-\sum_{i,k}\varphi_{, ik}H_{kj}.
	\end{align*}
	Since $\mathbf{H}$ is non-degenerate on $\triangle^0$, it follows that $\varphi_{, ik}=0$, i.e. $\varphi(x)$ must be an affine-linear function on $\triangle^0$, and hence on $\triangle$.
	
	Conversely, for any affine linear function 
	$\varphi(x)=\langle \xi,x \rangle+\lambda$, $\mathrm{grad}_{\omega}\mu^{*}\varphi=\sum_i \xi_iK_i$ which preserves $J$.
\end{proof}

\begin{lemma}(\cite{AV-MG})
	\label{integ by part}
	Let $\textbf{H}$ be any smooth $S^2\mathfrak{t}^{*}$-valued function on $\triangle$ which satisfies the boundary
	conditions of Theorem \ref{iff cond} , but not necessarily the positivity condition. Then, for any
	smooth functions $\varphi, \psi$ on $\mathfrak{t}^{*}$,
	\[
	\int_{\triangle}\left(\sum_{i, j=1}^{m} (\psi \mathbf{H}_{ij})_{,i j}\right) \varphi\ d \mu
	=\int_{\triangle}\left(\sum_{i, j=1}^{m} 
	(\psi \mathbf{H}_{i j}) \varphi_{,i j}\right) d \mu
	-2 \int_{\partial \triangle} \varphi \psi d \sigma.
	\]
\end{lemma}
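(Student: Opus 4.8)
The plan is to establish the identity by applying the divergence theorem twice and invoking the two boundary conditions of Theorem~\ref{iff cond}(ii) at the appropriate moments. The only boundary-geometry input needed is the measure relation (\ref{Lebesgue mea}): on a facet $F_k\subset\partial\triangle$ with inward conormal $u_k=dL_k$, it translates the Euclidean flux of a vector field $V=(V_1,\dots,V_m)$ into $\int_{F_k}\langle V,\nu\rangle\,dS=-\int_{F_k}\sum_i V_i L_{k,i}\,d\sigma$, where $\nu$ is the outer unit normal. Since $f$, hence $L_k$, is affine, every $L_{k,i}=\partial L_k/\partial x_i$ is constant, which lets derivatives pass freely through the contractions with $u_k$ below. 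Throughout I abbreviate $B_{ij}:=\psi\mathbf{H}_{ij}$, which is symmetric and smooth on $\triangle$ by condition~(i).

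First I would integrate by parts once, writing $\sum_{i,j}B_{ij,ij}\,\varphi=\sum_i\partial_i\big(\sum_j B_{ij,j}\,\varphi\big)-\sum_{i,j}B_{ij,j}\,\varphi_{,i}$ and applying the divergence theorem to the first term. The resulting boundary contribution over $F_k$ is $-\int_{F_k}\big(\sum_{i,j}B_{ij,j}L_{k,i}\big)\varphi\,d\sigma$. Here is the heart of the argument. Set $P_j:=\sum_i\mathbf{H}_{ij}L_{k,i}$; the first boundary condition $\mathbf{H}_\xi(u_k,\cdot)=0$ says precisely $P_j\equiv0$ on $F_k$, so the term in which the derivative falls on $\psi$ drops out and we are left with $\sum_{i,j}B_{ij,j}L_{k,i}=\psi\sum_j P_{j,j}$ on $F_k$. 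Because $P_j$ vanishes on the whole facet its tangential derivatives vanish, so $\nabla P_j$ is parallel to the conormal $u_k$; expanding $\sum_j P_{j,j}$ accordingly and substituting the second boundary condition in the form $\sum_{i,j}\mathbf{H}_{ij,l}L_{k,i}L_{k,j}=2L_{k,l}$ (which is exactly $(d\mathbf{H})_\xi(u_k,u_k)=2u_k$) collapses the triple contraction $\sum_{i,j,l}\mathbf{H}_{ij,l}L_{k,i}L_{k,j}L_{k,l}$ to $2|u_k|^2$, giving $\sum_j P_{j,j}=2$ on $F_k$. Thus this first boundary term equals $-2\sum_k\int_{F_k}\varphi\psi\,d\sigma=-2\int_{\partial\triangle}\varphi\psi\,d\sigma$, the exact boundary term claimed.

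It remains to integrate by parts a second time in the interior term $-\int_\triangle\sum_{i,j}B_{ij,j}\varphi_{,i}\,d\mu$. Writing it as $-\int_\triangle\sum_j\partial_j\big(\sum_i B_{ij}\varphi_{,i}\big)\,d\mu+\int_\triangle\sum_{i,j}B_{ij}\varphi_{,ij}\,d\mu$, the divergence theorem produces a boundary term proportional to $\sum_i B_{ij}L_{k,j}=\psi\sum_j\mathbf{H}_{ij}L_{k,j}$, which vanishes on $F_k$ by the same first boundary condition $\mathbf{H}_\xi(u_k,\cdot)=0$. Hence only $\int_\triangle\sum_{i,j}(\psi\mathbf{H}_{ij})\varphi_{,ij}\,d\mu$ survives, and combining with the previous paragraph yields the stated formula.

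The main obstacle is the identity $\sum_{i,j}\mathbf{H}_{ij,j}L_{k,i}=2$ on $F_k$: one cannot simply differentiate $P_j\equiv0$, since only its tangential derivatives are controlled a priori. The correct reading is that $P_j\equiv0$ on the facet forces $\nabla P_j$ to be conormal, and it is only after projecting onto $u_k$ that the second boundary condition applies and delivers the numerical factor $2$. Getting the bookkeeping of the three contractions with $u_k$ right—one from the definition of $P_j$, one from the normal projection, one from $\sum_j P_{j,j}$—is the delicate point; everything else is the standard double integration by parts.
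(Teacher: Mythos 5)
Your proof is correct and follows essentially the same route as the paper's: a double integration by parts against the measure convention $dL_k\wedge d\sigma=-d\mu$, with the boundary term from the second integration killed by $\mathbf{H}_{\xi}(u_k,\cdot)=0$ and the surviving boundary term evaluated to $2\psi$ via both conditions of Theorem \ref{iff cond}(ii). The only cosmetic difference is in the key step $\sum_{i,j}\mathbf{H}_{ij,j}L_{k,i}=2$ on $F_k$: the paper computes the trace $\sum_i\langle d\mathbf{H}(dL_k,e_i),e_i^{*}\rangle$ in an adapted basis with $e_1=dL_k$ and $e_j^{*}$ tangent to $F_k$, whereas you argue that $\nabla P_j$ is conormal and extract the factor $2$ from the triple contraction --- the same observation in coordinate form.
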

Apply the above lemma for $\phi=1/f^{2m-1}$ and $\mathbf{H}=\mathbf{H}^u$ for some $u\in\mathcal{S}(\triangle, \mathbf{L})$, one has 
\begin{flalign}
&&-\int_{\triangle}\left(\sum_{i, j=1}^{m} (\dfrac{1}{f^{2m-1}} \mathbf{H}_{ij})_{,i j}\right) \varphi\ d \mu
=&-\int_{\triangle}\dfrac{1}{f^{2m-1}}\left(\sum_{i, j=1}^{m} 
\mathbf{H}_{i j}\varphi_{,i j}\right) d \mu
& \nonumber                 \\
&& 
&+2 \int_{\partial \triangle} \dfrac{\varphi}{f^{2m-1}} d \sigma. 
& \label{integral by part for f}
\end{flalign}
By the definition of weighted extremal function, let $\phi=h_0+b$ be a weighted extremal function as in the proof of Thm \ref{weighted extremal vector field}. 
$\phi$ is unique by normalization.
By Lemma \ref{Killing affine}, $\phi=\mu^{*}s_{(\triangle, \mathbf{L}, f)}$ for some affine linear function $s_{(\triangle, \mathbf{L}, f)}$ on $\triangle$. 
We call it the \textit{weighted extremal affine function} of ($\triangle, \textbf{L}$).
For any affine function $\varphi$ on $\triangle$, we have
\begin{align*}
&2\int_{\partial\triangle}\varphi \dfrac{d\sigma}{f^{2m-1}}
-\int_{\triangle}s_{(\triangle, \textbf{L}, f)} \varphi \dfrac{d\mu}{f^{2m+1}}         \\
=&\int_{\triangle}\dfrac{1}{f^{2m-1}}\left(\sum_{i, j=1}^{m} 
\mathbf{H}_{i j}\varphi_{,i j}\right) d \mu
-\int_{\triangle}\left(\sum_{i, j=1}^{m} (\dfrac{1}{f^{2m-1}} \mathbf{H}_{ij})_{,i j} 
+\dfrac{s_{(\triangle, \mathbf{L}, f)}}{f^{2m+1}} \right) \varphi\ d \mu              \\
=&\int_{\triangle} (s_{u,f}-s_{(\triangle, \mathbf{L}, f)}) \varphi \dfrac{d\mu}{f^{2m+1}}  \\
=&\dfrac{1}{(2\pi)^m} \int_M
(s_{\tilde{g},\omega}-\phi)\mu^{*}\varphi \dfrac{v_{\omega}}{f^{2m+1}}                \\
=&0.
\end{align*}
Thus, we can extend the definition of weighted extremal affine function to any compact convex simple labelled polytope as follows:
\begin{definition}
	\label{def weighted affine func}
	Suppose ($\triangle, \textbf{L}$) is a compact convex simple labelled polytope in $\mathfrak{t}^{*}$. 
	Affine-linear function
	$s_{(\triangle, \textbf{L}, f)}$ on $\mathfrak{t}^{*}$ is called the \textit{weighted extremal affine function} of 
	($\triangle, \textbf{L}$) 
	if for any affine linear function $\varphi$, such that
	\begin{equation}
	\label{extr aff cond}
	2\int_{\partial\triangle}\varphi \dfrac{d\sigma}{f^{2m-1}}
	-\int_{\triangle}s_{(\triangle, \textbf{L}, f)} \varphi \dfrac{d\mu}{f^{2m+1}}=0.
	\end{equation}
\end{definition}
\begin{proposition}
	\label{extr aff func}
	Suppose ($\triangle, \textbf{L}$) is a compact convex simple labelled polytope in $\mathfrak{t}^{*}$. 
	Then, there exists a unique weighted extremal affine function $s_{(\triangle, \mathbf{L}, f)}$ of ($\triangle, \textbf{L}$).
	Furthermore, if for $u\in\mathcal{S}$, the corresponding metric is $f$-\textit{extremal}, i.e. satisfies
	\[
	-f^{2m+1}\sum^m_{i,j=1}\left(\dfrac{1}{f^{2m-1}}\textbf{H}^u_{ij} \right)_{,ij}
	=s_{u,f}(x)=\langle \xi,x\rangle+\lambda,
	\]
	then the affine-function $s_{u,f}(x)$ must be equal to $s_{(\triangle, \textbf{L}, f)}$.
\end{proposition}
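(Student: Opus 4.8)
The plan is to separate the statement into two claims that I prove independently: the existence and uniqueness of $s_{(\triangle, \mathbf{L}, f)}$, which reduces to finite-dimensional linear algebra, and its identification with $s_{u,f}$ in the $f$-extremal case, which will follow directly from the integration-by-parts identity (\ref{integral by part for f}).

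For existence and uniqueness I would view (\ref{extr aff cond}) as an equation inside the $(m+1)$-dimensional vector space $\mathcal{A}$ of affine-linear functions on $\mathfrak{t}^{*}$, spanned by $1, x_1, \ldots, x_m$. On $\mathcal{A}$ define the symmetric bilinear form $B(s, \varphi) := \int_{\triangle} s\,\varphi\, d\mu/f^{2m+1}$ and the linear functional $\ell(\varphi) := 2\int_{\partial\triangle}\varphi\, d\sigma/f^{2m-1}$; then finding the weighted extremal affine function amounts to solving $B(s_{(\triangle, \mathbf{L}, f)}, \cdot) = \ell$. The crucial observation is that $B$ is positive definite: since $f$ is positive on the compact polytope $\triangle$, the weight $f^{-(2m+1)}$ is positive and bounded, so $B(\varphi, \varphi) = \int_{\triangle}\varphi^2\, d\mu/f^{2m+1} > 0$ whenever $\varphi \not\equiv 0$, an affine function being nonzero on a subset of $\triangle^0$ of full measure. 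Consequently the Gram matrix of $B$ in the basis $\{1, x_1, \ldots, x_m\}$ is invertible, which yields a unique $s_{(\triangle, \mathbf{L}, f)} \in \mathcal{A}$ satisfying $B(s_{(\triangle, \mathbf{L}, f)}, \varphi) = \ell(\varphi)$ for all $\varphi \in \mathcal{A}$, i.e.\ precisely (\ref{extr aff cond}).

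For the second claim I would start from an $f$-extremal $u \in \mathcal{S}$, so that $s_{u,f}(x) = \langle \xi, x\rangle + \lambda$ is affine and, because $u \in \mathcal{S}$, the matrix $\mathbf{H}^u$ satisfies the boundary conditions of Theorem \ref{iff cond}. Applying (\ref{integral by part for f}) with $\mathbf{H} = \mathbf{H}^u$ and an arbitrary affine $\varphi$, the interior term $\sum_{i,j}\mathbf{H}_{ij}\varphi_{,ij}$ vanishes since $\varphi_{,ij} = 0$, leaving
\[
-\int_{\triangle}\left(\sum_{i,j}\Big(\frac{1}{f^{2m-1}}\mathbf{H}^u_{ij}\Big)_{,ij}\right)\varphi\, d\mu = 2\int_{\partial\triangle}\frac{\varphi}{f^{2m-1}}\, d\sigma.
\]
Substituting the extremal equation $-\sum_{i,j}\big(\frac{1}{f^{2m-1}}\mathbf{H}^u_{ij}\big)_{,ij} = s_{u,f}/f^{2m+1}$ into the left-hand side turns this into $\int_{\triangle} s_{u,f}\,\varphi\, d\mu/f^{2m+1} = 2\int_{\partial\triangle}\varphi\, d\sigma/f^{2m-1}$, which is exactly the defining relation (\ref{extr aff cond}) with $s_{u,f}$ in place of $s_{(\triangle, \mathbf{L}, f)}$. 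By the uniqueness established above, $s_{u,f} = s_{(\triangle, \mathbf{L}, f)}$.

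I do not expect a genuine obstacle. The only step needing a little care is the positive-definiteness of $B$, which rests on $f$ being bounded away from zero on the compact polytope, ensuring the weight is strictly positive and integrable; once this is in hand, the remainder is a direct substitution, the key structural fact being that affinity of $\varphi$ annihilates the second-order term in the integration-by-parts formula.
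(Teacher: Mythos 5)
Your proposal is correct and follows essentially the same route as the paper: the existence and uniqueness come from the invertibility of the Gram matrix of the weighted $L^2$-pairing on affine functions (the paper phrases this via the basis $\{f^{-(m+1/2)}, x_1 f^{-(m+1/2)}, \ldots\}$), and the identification of $s_{u,f}$ with $s_{(\triangle,\mathbf{L},f)}$ follows from Lemma \ref{integ by part} applied with affine $\varphi$ together with that uniqueness. No gaps.
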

\begin{proof}
	Writing
	\[
	s_{(\triangle, \mathbf{L}, f)}
	=a_{0}+\sum_{j=1}^{m} a_{j} x_{j},
	\]
	the condition (\ref{extr aff cond}) gives rise to a linear system with positive-definite symmetric matrix since the coefficient matrix is just the Gram matrix for the $L^2$-inner product restricted to the subspace spanned by the functions 
	$\{ \frac{1}{f^{m+1/2}}, \frac{x_1}{f^{m+1/2}}, \cdots, \frac{x_m}{f^{m+1/2}} \}$
	\begin{align*}
	a_{0} \int_{\triangle} x_{i}\dfrac{d\mu}{f^{2m+1}}
	+\sum_{j=1}^{m} a_j \int_{\triangle} x_{j} x_{i}  \dfrac{d\mu}{f^{2m+1}}
	=&2 \int_{\partial\triangle}x_{i} \dfrac{d\sigma}{f^{2m-1}},             \\
	a_{0} \int_{\triangle} \dfrac{d\mu}{f^{2m+1}}
	+\sum_{j=1}^{m} a_j \int_{\triangle} x_{j}   \dfrac{d\mu}{f^{2m+1}}
	=&2 \int_{\partial\triangle} \dfrac{d\sigma}{f^{2m-1}},
	\end{align*}
	which therefore determines $(a_0, \cdots, a_m)$ uniquely.
	
	By Lemma \ref{integ by part}, for affine linear function $\varphi$, $s_{u,f}(x)$ satisfies  the defining property (\ref{extr aff cond}). By the uniqueness of weighted extremal affine function, thus the affine-function $s_{u,f}(x)$ must be equal to $s_{(\triangle, \textbf{L}, f)}$.
\end{proof}

We introduce the non-linear PDE
\begin{equation}
\label{weight Abreu eq}
s_{u,f}:=
-f^{2m+1}\sum^m_{i,j=1}\left(\dfrac{1}{f^{2m-1}}\textbf{H}^u_{ij} \right)_{,ij}
=s_{(\triangle, \textbf{L}, f)},
\end{equation}
this equation is called  the \textit{weighted Abreu equation}. If ($\triangle, \textbf{L}$) is a Delzant polytope, solutions of (\ref{weight Abreu eq})
correspond to $f$-extremal $\mathbb{T}$-invariant, $\omega$-compatible K\"{a}hler metrics on the toric
symplectic manifold $(M, \omega, \mathbb{T})$.

Thus, the existence problem of cKEM metrics can be reduced to finding symplectic potential $u$, such that
$u$ satisfies equation (\ref{weight Abreu eq}) for
$s_{(\triangle, \textbf{L}, f)}=c$,
where $c$ is a constant determined by ($\triangle, \textbf{L}$).
For this case, Apostolov and Maschler proved that existence of this equation implies K-stable in \cite{AV-MG}.
In the following, we will prove a generalized version for existence of solutions of weighted Abreu equation and uniform K-stability.
\begin{definition}
	We define the functional
	\[
	\mathcal{F}_{(\triangle,f)}(u)
	:=2\int_{\partial\triangle} u \dfrac{d\sigma}{f^{2m-1}} 
	-\int_{\triangle}s_{(\triangle, \textbf{L}, f)} u \dfrac{d\mu}{f^{2m+1}}
	\]
	acting on the space of continuous function on $\triangle$. $\mathcal{F}_{(\triangle,f)}$ is called the \textit{Donaldson-Futaki invariant}. 
	We know $\mathcal{F}_{(\triangle,f)}$ vanishes on affine linear function from Proposition \ref{extr aff func}.
\end{definition}

We introduce several classes of convex function
$\mathcal{C}(\triangle)$,
$\mathcal{PL}$,  $\mathcal{C}_{\infty}$ and $\mathcal{C}_{*}$ on $\triangle$. 
Let $\mathcal{C}(\triangle)$ denote the set of continuous convex functions on $\triangle$,
$\mathcal{C}_{\infty}\subset\mathcal{C}(\triangle)$ the subset of continuous convex functions on $\triangle$ which are smooth in the interior. 
Denoted by $\mathcal{PL}$ the set of all piecewise linear convex function on $\triangle$. 
Let $\mathcal{G}$ denote the set of affine linear functions, which acts on $\mathcal{C}(\triangle)$ and $\mathcal{C}_{\infty}$ by translation.
We essentially consider $\mathcal{C}_{\infty}/\mathcal{G}$, introduce a subset $\widetilde{\mathcal{C}}\subset\mathcal{C}_{\infty}$ that is isomorphic to $\mathcal{C}_{\infty}/\mathcal{G}$.
This can be done as follows:
fix a point $x_0\in \triangle^0$ and consider the normalized functions (in Donaldson \cite{DS}):
\[
\widetilde{\mathcal{C}}
=\{u\in \mathcal{C}_{\infty} | u\geq u(x_0)=0 \}.
\] 
Then any $u$ in $\mathcal{C}_{\infty}$ can be written uniquely as $u=\pi(u)+g$, where $g$ is affine linear and $\pi(u)\in\widetilde{C}$ for a linear projection $\pi$.
For any $u\in \widetilde{\mathcal{C}}$ we set
\[
\|u\|_b :=\int_{\partial\triangle} u \dfrac{d\sigma}{f^{2m-1}}.
\]
We can check easily that $\|\cdot\|_b$ is a norm on $\widetilde{\mathcal{C}}$, where $b$ stands for boundary. 
For all $u\in \widetilde{\mathcal{C}}$, we have
\begin{equation}
\label{bdd by L1}
\|u\|_b\geq C_0\int_{\partial\triangle} u d\sigma
\geq C\int_{\triangle}u d\mu
=C\|u\|_{L^1},
\end{equation}
on the other hand, 
\[
\|\cdot\|_b\leq C\|\cdot\|_{\infty}.
\]
\begin{definition}
	We say that norm $\|\cdot\|$ on $\widetilde{\mathcal{C}}$ is \textit{tamed} if there exists $C > 0$ such that on $\widetilde{\mathcal{C}}$ 
	\[
	\dfrac{1}{C}\|\cdot\|_{1} \leq \|\cdot\| \leq C\|\cdot\|_{\infty},
	\]
	where $\|\cdot\|_1:=\int_{\triangle}\cdot dx$ is the $L^1$-norm and $\|\cdot\|_{\infty}$ is the $C^0$-norm on $\widetilde{\mathcal{C}}$.
\end{definition}
\begin{remark}
	(i) The norm $\|\cdot\|_b$ is tamed.
	
	(ii) For any tamed norm $\|\cdot\|$, both the spaces of PL convex
	functions and smooth convex functions on the whole of $\triangle$ are dense in
	$\mathcal{C}^{*}(\triangle):=
	\{f\in\mathcal{C}(\triangle): f(x)\geq f(x_0)=0 \}$
\end{remark}

Let $\triangle^{*}$ be the union of $\triangle^0$ and its codimension-one faces. Let $\mathcal{C}_1$ be the set of positive convex functions $f$ on $\triangle^{*}$ such that 
\[
\int_{\partial\triangle} f d\sigma <\infty,
\] 
hence, we can extend $\|\cdot\|_b$ to space $\mathcal{C}_1$.
\begin{proposition}(Proposition 5.2.6 in \cite{DS})
	Suppose that $f_n$ is a sequence of function in $\widetilde{\mathcal{C}}$ with
	\[
	\int_{\partial\triangle} f_n d\sigma \leq C.
	\]
	Then there is a subsequence which converges, uniformly over compact subset of $\triangle^0$, to a convex function $f$ which has a continuous extension to a function $f^{*}$ on $\triangle^{*}$, defining an element of $\mathcal{C}_1$ with 
	\[
	\int_{\partial\triangle} f^{*} d\sigma 
	\leq \lim\inf \int_{\partial\triangle} f_n d\sigma.
	\]
\end{proposition}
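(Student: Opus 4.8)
The plan is to treat the interior compactness and the boundary estimate separately: the first by the standard theory of convex functions, the second by a convexity-based comparison combined with Fatou's lemma. First I would convert the boundary hypothesis into an interior $L^1$ bound. Since the weight $f^{-(2m-1)}$ appearing in $\|\cdot\|_b$ is bounded between two positive constants on the compact $\triangle$, the hypothesis $\int_{\partial\triangle} f_n\,d\sigma\le C$ gives $\|f_n\|_b\le C'$, and then (\ref{bdd by L1}) yields $\|f_n\|_{L^1}=\int_\triangle f_n\,d\mu\le C''$ uniformly in $n$. Because each $f_n$ is convex and nonnegative with $f_n(x_0)=0$, Jensen's inequality applied to a ball $B(x,r)\subset\triangle^0$ (whose centroid is its center) gives the pointwise upper bound $f_n(x)\le |B(x,r)|^{-1}\int_\triangle f_n\,d\mu$, so on any compact $K\subset\subset\triangle^0$ the $f_n$ are uniformly bounded above, and below by $0$. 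A uniformly bounded family of convex functions is uniformly Lipschitz on interior compacta, so by Arzel\`a--Ascoli together with a diagonal argument over an exhaustion $K_1\subset K_2\subset\cdots$ of $\triangle^0$ I can extract a subsequence converging uniformly on compact subsets to a function $f$, which is convex as a locally uniform limit of convex functions.

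For the boundary I would define $f^*$ by radial monotone limits and compare interior and boundary values using convexity. Fix $p$ in the relative interior of a facet and set $\phi_n(t)=f_n(x_0+t(p-x_0))$ for $t\in[0,1]$; this is convex with $\phi_n(0)=0$, so $t\mapsto \phi_n(t)/t$ is nondecreasing and hence $f_n(p)\ge f_n(x_s)/s$ for $0<s<1$, where $x_s:=x_0+s(p-x_0)\in\triangle^0$. Letting $n\to\infty$ along the chosen subsequence and using interior convergence gives $\liminf_n f_n(p)\ge f(x_s)/s$ for every $s<1$; the same monotonicity applied to the convex limit $f$ shows $f(x_s)$ (equivalently $f(x_s)/s$) increases to a limit as $s\to1$, which I take as the definition of $f^*(p)$. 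This produces the pointwise inequality $f^*(p)\le \liminf_n f_n(p)$ on the relative interior of every facet, and standard convex analysis gives that $f^*$ is lower semicontinuous and continuous on $\triangle^*$ wherever it is finite, with the radial limit independent of the approach inside each facet.

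Finally I would integrate this pointwise bound. Fatou's lemma, the functions $f_n$ being nonnegative on $\partial\triangle$, gives
\[
\int_{\partial\triangle} f^*\,d\sigma \le \int_{\partial\triangle}\liminf_n f_n\,d\sigma \le \liminf_n \int_{\partial\triangle} f_n\,d\sigma,
\]
which is the asserted inequality and in particular shows $\int_{\partial\triangle} f^*\,d\sigma<\infty$, so $f^*\in\mathcal{C}_1$. The main obstacle is precisely this boundary step: interior convergence alone gives no control near $\partial\triangle$, and the content of the proposition is that the finite boundary mass survives in the limit. The device that makes it work is the monotonicity of $\phi_n(t)/t$, which follows from $\phi_n(0)=0$ and convexity; it transfers interior values to a lower bound on boundary values uniformly in $n$ and lets Fatou close the argument. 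The remaining care is in checking that the radial limits patch into a genuine lower semicontinuous convex function on $\triangle^*$, which is where convex analysis on the faces of $\triangle$ enters.
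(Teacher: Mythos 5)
The paper does not actually prove this statement: it is quoted verbatim from Donaldson (Proposition 5.2.6 of \cite{DS}) and used as a black box, so there is no in-paper argument to compare yours against. Judged on its own, your proof is correct and is essentially the standard argument. The interior step (boundary integral $\Rightarrow$ $L^1(\triangle)$ bound via (\ref{bdd by L1}), Jensen on small balls for local uniform upper bounds, local Lipschitz bounds for convex functions, Arzel\`a--Ascoli plus a diagonal extraction) is complete. The key device --- monotonicity of $s\mapsto f_n(x_0+s(p-x_0))/s$ forced by $f_n(x_0)=0$, which transfers interior convergence into the pointwise bound $f^*(p)\le\liminf_n f_n(p)$ on the open facets, followed by Fatou --- is exactly what makes the boundary mass survive, and you identify it as such.

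The one place you lean too hard on ``standard convex analysis'' is the continuity of $f^*$ on $\triangle^*$. Lower semicontinuity of the closure plus finiteness at a point does \emph{not} by itself give continuity at a boundary point of a convex domain (the function $y^2/x$ near the origin of $\{x>0\}$ is the usual counterexample); what saves you here is specific to the situation: (a) $f^*|_F=\sup_{s<1}f(x_0+s(\cdot-x_0))/s$ is convex on each facet $F$ and, being finite $\sigma$-a.e.\ by your Fatou bound, is finite and continuous on the relative interior $F^0$; and (b) near a point of $F^0$ the polytope is locally a half-space, so upper semicontinuity from $\triangle^0$ follows by sandwiching $f$ between values of $f^*$ at the vertices of a small simplex with one vertex in $\triangle^0$ and the rest in $F^0$ (a Gale--Klee--Rockafellar--type argument). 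This is why $\triangle^*$ excludes the codimension $\ge 2$ faces, where continuity can genuinely fail. Since you flag this as the remaining care and all the needed ingredients are already in your argument, I would call it an under-justified step rather than a gap, but it deserves the two sentences above rather than a citation to ``standard convex analysis.''
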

So we can identify $f$ and $f^{*}$ and view $f$ as a function in $\mathcal{C}_1$. 
We define
\begin{equation}
\begin{split}
\mathcal{C}_{*}
=\{u\in\mathcal{C}_1\ |\ \exists \text{ constant } C>0 \text{ and a sequence of }u^{(k)} \text{ in }\widetilde{\mathcal{C}} 
\text{ s.t. } \ \|u^{(k)}\|_b<C 
\\ \text{ and locally uniformly converges to }u \text{ in }\triangle^0
\}.
\end{split}
\end{equation}
Let $P>0$ be a constant, we define
\[
\mathcal{C}_{*}^{P}
=\left\{u \in \mathcal{C}_{*}\ |\ \| u\| _ { b } \leq P\right\}.
\]
The functional $\mathcal{F}_{(\triangle,f)}$ can be generalized to the classes $\mathcal{C}_{\infty}$, $\mathcal{C}_{*}$ and $\mathcal{C}_{*}^P$.

\begin{lemma}(\cite{Ch-Li-Sh})
	For any $u\in \mathcal{C}_{*}^P$, there is a sequence of functions 
	$u_k\in \mathcal{C}_{\infty}$ 
	such that $u_k$ locally uniformly converges to $u$ in $\triangle^0$ and
	\begin{equation}
	\label{norm converg}
	\|u\|_{b}
	=\lim _{k \rightarrow \infty} \left\|u_k\right\|_{b},
	\end{equation}
	\begin{equation}
	\label{Futaki converg}
	\mathcal{F}_{(\triangle,f)}(u)
	=\lim_{k \rightarrow \infty} \mathcal{F}_{(\triangle,f)}(u_k).
	\end{equation}
\end{lemma}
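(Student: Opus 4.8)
The plan is to reduce the statement to two separate convergences and then to build an explicit approximating sequence by combining an inward rescaling with a mollification. First observe that
\[
\mathcal{F}_{(\triangle,f)}(u)=2\|u\|_b-\int_{\triangle}s_{(\triangle,\mathbf{L},f)}\,u\,\dfrac{d\mu}{f^{2m+1}},
\]
so the Futaki convergence (\ref{Futaki converg}) follows at once from the norm convergence (\ref{norm converg}) together with the interior convergence $\int_{\triangle}s_{(\triangle,\mathbf{L},f)}u_k\frac{d\mu}{f^{2m+1}}\to\int_{\triangle}s_{(\triangle,\mathbf{L},f)}u\frac{d\mu}{f^{2m+1}}$, since $s_{(\triangle,\mathbf{L},f)}/f^{2m+1}$ is bounded on the compact polytope $\triangle$. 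Hence it suffices to produce $u_k\in\mathcal{C}_{\infty}$ converging to $u$ locally uniformly in $\triangle^0$ and realizing these two limits.

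For the construction, recall that $u\in\mathcal{C}_*^P$ is convex, lies in $\mathcal{C}_1$, and inherits from its defining sequence in $\widetilde{\mathcal{C}}$ the normalization $u\geq u(x_0)=0$; in particular it has a continuous extension $u^{*}$ to $\triangle^{*}$ with $\int_{\partial\triangle}u^{*}\frac{d\sigma}{f^{2m-1}}=\|u\|_b<\infty$, and $u\in L^1(\triangle,d\mu)$ by (\ref{bdd by L1}). For $t\in(0,1)$ set $u_t(x):=u\big((1-t)x_0+tx\big)$. Since $x\mapsto(1-t)x_0+tx$ is affine and carries a fixed neighbourhood of $\triangle$ into a compact subset of $\triangle^0$, the function $u_t$ is convex and continuous on a neighbourhood of $\triangle$. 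Convexity together with $u(x_0)=0$ gives the two pointwise facts I will use: $u_t(x)\leq t\,u(x)\leq u(x)$ on $\triangle$, and, for each fixed $x$, the map $t\mapsto u_t(x)$ is a nonnegative convex function of $t$ vanishing at $t=0$, hence nondecreasing; thus $u_t\nearrow u$ pointwise as $t\to1$, and $u_t|_{\partial\triangle}\nearrow u^{*}$ for $d\sigma$-a.e.\ boundary point. Finally mollify: let $u_{t,\delta}:=u_t*\rho_\delta$ with a standard nonnegative kernel and $\delta$ small enough (depending on $t$) that $u_{t,\delta}$ is defined, smooth and convex on all of $\triangle$, so $u_{t,\delta}\in\mathcal{C}_{\infty}$ and $u_{t,\delta}\to u_t$ uniformly on $\triangle$ as $\delta\to0$.

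Passing to the limit is then routine. For the boundary norm, monotone convergence applied to $u_t|_{\partial\triangle}\nearrow u^{*}$ against the finite measure $\frac{d\sigma}{f^{2m-1}}$ gives $\|u_t\|_b\to\|u\|_b$ as $t\to1$, while uniform convergence of $u_{t,\delta}$ on the compact set $\partial\triangle$ gives $\|u_{t,\delta}\|_b\to\|u_t\|_b$ as $\delta\to0$. For the interior term, the domination $0\leq u_t\leq u\in L^1$ with $u_t\to u$ pointwise yields $\int_{\triangle}s_{(\triangle,\mathbf{L},f)}u_t\frac{d\mu}{f^{2m+1}}\to\int_{\triangle}s_{(\triangle,\mathbf{L},f)}u\frac{d\mu}{f^{2m+1}}$ by dominated convergence, and the mollification step again converges by uniform convergence on $\triangle$. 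A diagonal choice $t_k\to1$, $\delta_k\to0$ (with $\delta_k$ small relative to $1-t_k$) then produces $u_k:=u_{t_k,\delta_k}\in\mathcal{C}_{\infty}$ converging to $u$ locally uniformly in $\triangle^0$ and satisfying both (\ref{norm converg}) and the interior convergence, hence (\ref{Futaki converg}).

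The one genuinely delicate point is the boundary-norm equality (\ref{norm converg}). Local uniform convergence in $\triangle^0$ gives no control whatsoever at $\partial\triangle$, and for a general sequence Donaldson's Proposition~5.2.6 (\cite{DS}) yields only the lower-semicontinuity inequality $\|u\|_b\leq\liminf_k\|u_k\|_b$; obtaining the reverse inequality is exactly what forces the special construction. The inward rescaling $u_t(x)=u((1-t)x_0+tx)$ is engineered so that its boundary trace sits below $u^{*}$ and increases monotonically to it, turning the boundary convergence into a clean application of the monotone convergence theorem. The subsequent mollification must be performed with $\delta\ll 1-t$ so as to preserve both the convexity on $\triangle$ and the boundary estimate; this balancing of the two parameters is the main technical care required.
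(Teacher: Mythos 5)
Your proposal is correct and follows essentially the same route as the paper's proof: an inward dilation of $u$ toward the normalization point (the paper writes $\tilde u_k(x)=u(r_kx)$ after translating so that the normalization point is the origin, which is your $u_t$), the key convexity inequality $u_t\leq u$ giving convergence of the boundary integral (you invoke monotone convergence, the paper dominated convergence---interchangeable here), followed by mollification with parameter small relative to $1-t$ and a diagonal choice. No gaps; the emphasis on the boundary term being the only delicate point matches the structure of the paper's argument.
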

\begin{proof}
	Without loss of generality, we can assume that $0$ is the center of the mass of $\triangle$ and $u$ is normalized at the origin, i.e. $u\geq u(0)=0$.
	
	Let $\tilde{u}_k(x)=u(r_k x)$, with $r_k<1$ and $\lim_{k\rightarrow\infty}r_k=1$.
	Then the sequence $\{\tilde{u}_k \}$ in $\mathcal{C}(\triangle)$ 
	locally uniformly converges to $u$. From the construction and the convexity we know $u(x)\geq\tilde{u}_k(x)$ for any $x\in\triangle$.
	Thus $\frac{1}{f^{2m-1}}u(x)\geq\frac{1}{f^{2m-1}}\tilde{u}_k(x)$,
	then the Lebesgue dominated convergence theorem implies that $\frac{1}{f^{2m-1}}\tilde{u}_k(x)$ converges to $(\frac{1}{f^{2m-1}}u)|_{\partial\triangle}$ in $L^1(\partial\triangle)$ as $k\rightarrow\infty$. 
	In particular,
	\begin{equation}
	\label{L1 converge}
	\lim_{k \rightarrow \infty} \int_{\partial\triangle} |\tilde{u}_k-u| \dfrac{d\sigma}{f^{2m-1}} =0.
	\end{equation}
	
	Let $\tilde{u}_k^{\varepsilon}$ be the  convolution $\tilde{u}_k\star \rho_{\varepsilon}(x)$, 
	where $\rho_{\varepsilon}(x)\geq 0$ is a smooth mollifier of $\mathbb{R}^m$ whose support is in $B_{\varepsilon}(0)$. 
	For any $k$ and 
	\[
	\varepsilon< \dfrac{1-r_k}{4} dist(0, \partial\triangle),
	\]
	$\tilde{u}_k^{\varepsilon}$ is a smooth convex function in $\triangle^0$. 
	In fact, for any $x, x_1\in\triangle^0$, and $t\in(0, 1)$
	\begin{align*}
	\tilde{u}_k^{\varepsilon}(tx+(1-t)x_1)
	=&\int_{\mathbb{R}^m} \tilde{u}_k(tx+(1-t)x_1-y) \rho_{\varepsilon}(y) dy                          \\
	=&\int_{B_{\varepsilon}(0)} \tilde{u}_k(t(x-y)+(1-t)(x_1-y)) \rho_{\varepsilon}(y) dy     \\
	\leq& \int_{B_{\varepsilon}(0)} \big[ t\tilde{u}_k(x-y)+ (1-t)\tilde{u}_k(x_1-y)\big] \rho_{\varepsilon}(y) dy      \\
	=&t\tilde{u}_k^{\varepsilon}(x) +(1-t)\tilde{u}_k^{\varepsilon}(x_1).
	\end{align*}
	Since $\tilde{u}_k$ is continuous on $\Omega_k$, we know that $\tilde{u}_k^{\varepsilon}$ uniformly converges to $\tilde{u}_k$ on compact subset of $\Omega_k$, where $\Omega_k$ is domain of $\tilde{u}_k$. In fact, $\triangle$ is compact subset of $\Omega_k$.
	Hence, then $\tilde{u}_k^{\varepsilon}$ uniformly converges to $\tilde{u}_k$ in $\triangle$ as $\varepsilon\rightarrow 0$. 
	We choose $\varepsilon_k>0$ such that
	\begin{equation}
	\label{L infty}
	\|\tilde{u}_k^{\varepsilon_k}  -\tilde{u}_k\|_{L^{\infty}(\triangle)} \leq \dfrac{1}{k}
	\end{equation}
	Let $u_k=\tilde{u}_k^{\varepsilon_k}$. Since
	\[
	\int_{\partial\triangle} |u-u_k| \dfrac{d\sigma}{f^{2m-1}}
	\leq \int_{\partial\triangle} |u-\tilde{u}_k| \dfrac{d\sigma}{f^{2m-1}}
	+\int_{\partial\triangle} |\tilde{u}_k-u_k| \dfrac{d\sigma}{f^{2m-1}},
	\]
	by (\ref{L1 converge}) and (\ref{L infty}), we conclude that $u_k$ converges to $u|_{\partial\triangle}$ in $L^1(\partial\triangle)$. 
	We proved (\ref{norm converg}).
	On the other hand, $u_k$ locally uniformly converges to $u$ in $\triangle^0$.
	Then (\ref{Futaki converg}) still holds.
	The lemma is proved.
\end{proof}

\begin{definition}
	\label{def uniform}
	($\triangle, \mathbf{L}, f$) is called \textit{uniformly K-stable} if there exists a constant $\lambda> 0$ such that
	\[
	\mathcal{F}_{(\triangle,f)}(u)\geq 
	\lambda \|u\|_b
	=\lambda \int_{\partial\triangle}u \dfrac{d\sigma}{f^{2m-1}},
	\quad \forall u\in \widetilde{\mathcal{C}}.
	\]
\end{definition}

\begin{theorem}
	\label{uniformly}
	If the weighted Abreu equation (\ref{weight Abreu eq}) has a solution in $\mathcal{S}$, then 
	($\triangle, \mathbf{L}, f$) is uniformly K-stable.
\end{theorem}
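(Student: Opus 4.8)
The plan is to use the hypothesised solution $u\in\mathcal{S}$ to turn the Donaldson--Futaki invariant into a manifestly nonnegative interior integral, and then to upgrade the resulting semistability into the uniform bound $\lambda\|v\|_b$ by exploiting the precise rate at which $\mathbf{H}^u$ degenerates along $\partial\triangle$. First I would apply Lemma \ref{integ by part} with $\psi=1/f^{2m-1}$ and $\mathbf{H}=\mathbf{H}^u$, exactly as in (\ref{integral by part for f}), and substitute the weighted Abreu equation (\ref{weight Abreu eq}), which gives $\sum_{i,j}(\mathbf{H}^u_{ij}/f^{2m-1})_{,ij}=-s_{(\triangle,\mathbf{L},f)}/f^{2m+1}$. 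For every smooth convex $v$ this yields the identity
\[
\mathcal{F}_{(\triangle,f)}(v)=\int_{\triangle}\frac{1}{f^{2m-1}}\sum_{i,j=1}^m\mathbf{H}^u_{ij}\,v_{,ij}\,d\mu .
\]
Since $\mathbf{H}^u=\mathrm{Hess}(u)^{-1}$ is positive definite on $\triangle^0$ and $\mathrm{Hess}(v)\succeq0$, the integrand is nonnegative, so $\mathcal{F}_{(\triangle,f)}(v)\geq0$; this is K-semistability, and the whole point is to promote it to a strict bound by $\lambda\|v\|_b$.

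The essential geometric input is the boundary behaviour of $\mathbf{H}^u$ from Theorem \ref{iff cond}(ii)--(iii): along the facet $F_k$ with inward normal $dL_k$ one has $\mathbf{H}^u(dL_k,\cdot)=0$ on $F_k$ and $(d\mathbf{H}^u)(dL_k,dL_k)=2\,dL_k$, while $\mathbf{H}^u$ is positive definite in the interior. By smoothness and compactness of $\triangle$ this produces a collar $U_k$ of $F_k$ and a constant $c_0>0$ with $\mathbf{H}^u(dL_k,dL_k)\geq c_0 L_k$ on $U_k$. The mechanism I want to use is that the value of a convex $v$ on $F_k$ is \emph{paid for} by the second derivative of $v$ in the normal direction integrated against the distance to $F_k$; since $\mathbf{H}^u(dL_k,dL_k)$ vanishes exactly linearly it supplies precisely this distance weight, so boundary mass cannot escape to $\partial\triangle$ without generating a commensurate amount of $\mathcal{F}_{(\triangle,f)}(v)$. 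Concretely, in coordinates $(t,y')$ adapted to $F_k$ with $t=L_k$ and $d\sigma=dy'$, I would bound the integrand from below by a fixed multiple of $t\,v_{,tt}$ on $U_k$ and use $\int_0^{t_0}t\,v_{,tt}\,dt=v(0,y')-v(t_0,y')+t_0 v_{,t}(t_0,y')$ to extract $v|_{F_k}$, discarding the nonnegative integrand outside the collars and summing over $k$.

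It is cleanest to run this first on piecewise-linear convex functions, which are dense for the tamed norm $\|\cdot\|_b$. For such $v$, $\mathrm{Hess}(v)$ is a positive measure carried by the creases and is rank one in each crease normal $n$, so no off-diagonal entries of $\mathbf{H}^u$ intervene and $\mathcal{F}_{(\triangle,f)}(v)=\sum_{\text{walls }W}(\text{slope jump})\int_{W}\mathbf{H}^u(n,n)\,\frac{d\mathcal{H}^{m-1}}{f^{2m-1}}$. Cancelling the slope jump, the desired inequality reduces to a purely geometric comparison between the wall integral of $\mathbf{H}^u(n,n)$ and the boundary integral of the distance to $W$, which I would deduce from $\mathbf{H}^u(n,n)\gtrsim\mathrm{dist}(\cdot,\partial\triangle)$. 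I would then pass from piecewise-linear $v$ to all $v\in\widetilde{\mathcal{C}}$ using density together with the convergences (\ref{norm converg}) and (\ref{Futaki converg}) of the approximation lemma.

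The hard part is exactly this quantitative boundary estimate, that is, ruling out the escape of boundary mass that a naive compactness argument cannot exclude: a sequence $v_n$ with $\|v_n\|_b=1$ may converge locally uniformly to an affine (hence, after normalization, zero) limit while its boundary norm persists, so lower semicontinuity of $\|\cdot\|_b$ yields no contradiction by itself. For smooth $v$ the obstruction is concretely the off-diagonal terms $\mathbf{H}^u(dL_k,\cdot)$, which vanish only to first order and must be absorbed when isolating the normal contribution, after which the interior remainders $v(t_0,\cdot)$ and $v_{,t}(t_0,\cdot)$ have to be controlled by $\|v\|_b$ through (\ref{bdd by L1}) with a sufficiently small coefficient; the piecewise-linear reduction removes the off-diagonal issue but transfers the entire difficulty into the wall-versus-boundary inequality and into checking that the density step does not degrade the constant $\lambda$.
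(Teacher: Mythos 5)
Your opening step is correct and coincides with the paper's: applying Lemma \ref{integ by part} with $\psi=1/f^{2m-1}$ and substituting (\ref{weight Abreu eq}) gives $\mathcal{F}_{(\triangle,f)}(v)=\int_{\triangle}\frac{1}{f^{2m-1}}\sum_{i,j}\mathbf{H}^u_{ij}v_{,ij}\,d\mu\geq 0$, which is exactly (\ref{Fut if exist solu}). After that, however, your argument has a genuine gap at precisely the point you yourself flag as ``the hard part.'' The collar mechanism cannot work as described: for a convex $v$ that is affine in the normal direction throughout the collar $U_k$ (for instance a piecewise-linear function whose single crease lies well inside $\triangle^0$), one has $\int_0^{t_0}t\,v_{,tt}\,dt=0$ on every normal ray, yet $v|_{F_k}$ and hence $\|v\|_b$ can be of order one. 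The boundary mass of such a $v$ is paid for entirely by Hessian mass concentrated in the interior, which your argument discards when it restricts to the collars; so the identity $v(0,y')=\int_0^{t_0}t\,v_{,tt}\,dt+\bigl(v(t_0,y')-t_0v_{,t}(t_0,y')\bigr)$ forces you to control the tangent-line term by $\delta\|v\|_b$ with $\delta<1$, and no such bound holds uniformly. Your piecewise-linear reformulation correctly relocates the difficulty to the ``wall-versus-boundary'' inequality $\int_W\mathbf{H}^u(n,n)\,d\mathcal{H}^{m-1}/f^{2m-1}\gtrsim\int_{\partial\triangle}\mathrm{dist}_+(\cdot,W)\,d\sigma/f^{2m-1}$, uniformly over all positions and orientations of the wall $W$ (including walls shaving off small corners and walls nearly tangent to facets), but this inequality is exactly the statement of uniform K-stability for crease functions and is asserted, not proved. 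As it stands the proposal reduces the theorem to an unproven claim of essentially the same depth.

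For comparison, the paper avoids any quantitative boundary estimate by arguing by contradiction with a compactness argument. If uniform K-stability fails, one normalizes a sequence $u^{(k)}$ with $\int_{\partial\triangle}u^{(k)}\,d\sigma/f^{2m-1}=1/2$ and $\mathcal{F}_{(\triangle,f)}(u^{(k)})\to 0$, extracts a locally uniform limit $u\in\mathcal{C}_*^K$, and invokes Lemma \ref{lower bound}: any positive one-dimensional Monge--Amp\`ere mass of $u$ on an interior segment forces $\mathcal{F}_{(\triangle,f)}(u^{(k)})\geq\tau m>0$ uniformly in $k$, using exactly your identity together with the interior positive lower bound on the eigenvalues of $\mathbf{H}^u/f^{2m-1}$ on a compact sub-box (no boundary asymptotics of $\mathbf{H}^u$ are needed). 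Hence the limit $u$ is affine, so $u\equiv 0$ by the normalization, and then $\int_{\triangle}s_{(\triangle,\mathbf{L},f)}u^{(k)}\,d\mu/f^{2m+1}\to 0$ contradicts the fact that this integral tends to $1$. The contradiction is derived from the \emph{interior} integral $\int_{\triangle}s_{(\triangle,\mathbf{L},f)}u^{(k)}\,d\mu/f^{2m+1}$, which unlike the boundary norm \emph{is} continuous under local uniform convergence; this is how the paper sidesteps the loss of boundary mass that blocks your direct approach. If you want to salvage your strategy, you would need to prove the wall inequality with a uniform constant, which would yield an effective $\lambda$ but requires substantially more work than is sketched here.
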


\section{Proof of Theorem \ref{uniformly}}
\label{Proof of Theorem}
Assume that $v\in \mathcal{S}$ is the solution of the weighted Abreu equation (\ref{weight Abreu eq}).

Recall that for any convex function $u$ on a domain $\Omega \subset \mathbb{R}^n$, a Monge–Amp\`{e}re measure $M_u$ on $\Omega$ is defined.

Let $u$ be a convex function. For any segment $I\Subset \triangle$, the convex function $u$ defines a convex function $w:=u|_I$ on $I$. It defines a Monge–Amp\`{e}re measure $M_w$ on $I$,we denote this by $N$.

\begin{lemma}
	\label{lower bound}
	Let $u\in \mathcal{C}^K_{*}$ and $u^{(k)}\in \mathcal{C}_{\infty}$ locally uniformly converges to $u$. If $N(I)=m>0$, then
	\[
	\mathcal{F}_{(\triangle,f)}(u^{(k)})> \tau m
	\]
	for some positive constant $\tau$ independent of $k$.
\end{lemma}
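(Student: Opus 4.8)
The plan is to first convert the Donaldson--Futaki functional into a manifestly nonnegative integral, exploiting that $v\in\mathcal{S}$ solves the weighted Abreu equation (\ref{weight Abreu eq}). Applying the integration-by-parts identity (\ref{integral by part for f}) with $\mathbf{H}=\mathbf{H}^v$ and $\varphi=u^{(k)}$, and substituting $-f^{2m+1}\sum_{i,j}(\frac{1}{f^{2m-1}}\mathbf{H}^v_{ij})_{,ij}=s_{v,f}=s_{(\triangle,\mathbf{L},f)}$ on the left-hand side, the boundary terms cancel against those in the definition of $\mathcal{F}_{(\triangle,f)}$, leaving
\[
\mathcal{F}_{(\triangle,f)}(u^{(k)})=\int_{\triangle}\frac{1}{f^{2m-1}}\sum_{i,j=1}^{m}\mathbf{H}^v_{ij}\,(u^{(k)})_{,ij}\,d\mu .
\]
Since $\mathbf{H}^v=\mathrm{Hess}(v)^{-1}$ is positive definite and each $u^{(k)}$ is convex, the integrand is pointwise nonnegative; this already recovers $\mathcal{F}_{(\triangle,f)}(u^{(k)})\geq0$, and the remaining task is the quantitative lower bound $\tau m$.

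Next I would localize to the segment $I$. I choose a solid tube $T\Subset\triangle^0$ around $I$; on the compact closure $\overline{T}$ the smooth positive-definite $\mathbf{H}^v$ and the positive affine function $f$ provide a uniform bound $\frac{1}{f^{2m-1}}\mathbf{H}^v\geq c\,\mathrm{Id}$ with $c>0$. Discarding the nonnegative integrand off $T$ and using $\mathrm{tr}(\mathbf{H}^v\,\mathrm{Hess}\,\phi)\geq\lambda_{\min}(\mathbf{H}^v)\,\mathrm{tr}(\mathrm{Hess}\,\phi)\geq\lambda_{\min}(\mathbf{H}^v)\,\phi_{\xi\xi}$, where $\xi$ is the unit direction of $I$, gives
\[
\mathcal{F}_{(\triangle,f)}(u^{(k)})\geq c\int_{T}(u^{(k)})_{\xi\xi}\,d\mu .
\]
An orthogonal change of coordinates sending $\xi$ to $e_1$ preserves $d\mu$ and convexity; writing $x=(x_1,y)$ and applying Fubini turns the right-hand side into $c\int_{D}\nu^{(k)}_y\,dy$, where $D$ is the transverse disk and $\nu^{(k)}_y=\int(u^{(k)})_{x_1x_1}(\cdot,y)\,dx_1$ is the one-dimensional Monge--Amp\`ere mass of the slice at $y$.

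I would then pass to the limit by a double semicontinuity argument. Fatou's lemma gives $\liminf_k\int_D\nu^{(k)}_y\,dy\geq\int_D(\liminf_k\nu^{(k)}_y)\,dy$, and for each fixed $y$ the locally uniform convergence $u^{(k)}(\cdot,y)\to u(\cdot,y)$, together with weak-$*$ lower semicontinuity of the $1$-D Monge--Amp\`ere mass on open intervals, yields $\liminf_k\nu^{(k)}_y\geq\nu_y$, the slice mass of the limit $u$. What remains is to see that $\nu_y$ stays comparable to $N(I)=m$ for $y$ ranging over a set of positive measure near the center $y_0$ of $I$.

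The main obstacle is precisely this transfer of mass from the single one-dimensional segment to the full-dimensional tube, and it is here that convexity of the limit $u$ is indispensable. I would establish that $y\mapsto\nu_y$ is lower semicontinuous: writing $\nu_y=u^{-}_{x_1}(b,y)-u^{+}_{x_1}(a,y)$ for the mass on the open interval, the right derivative $u^{+}_{x_1}(x_0,\cdot)=\inf_{h>0}\frac{u(x_0+h,\cdot)-u(x_0,\cdot)}{h}$ is upper semicontinuous and the left derivative $u^{-}_{x_1}(x_0,\cdot)$ is lower semicontinuous, so $\nu_y$ is lower semicontinuous in $y$ (convexity forbids the bending along $I$ from concentrating on a $y$-null set). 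Choosing $I$ so that its interior already carries mass $\geq m$ ensures $\nu_{y_0}\geq m$, hence $\nu_y\geq m-\varepsilon$ on some neighborhood $D'\ni y_0$, and therefore $\liminf_k\mathcal{F}_{(\triangle,f)}(u^{(k)})\geq c\,|D'|\,(m-\varepsilon)$. Taking $\tau=\tfrac12 c\,|D'|$ and $\varepsilon$ small yields $\mathcal{F}_{(\triangle,f)}(u^{(k)})>\tau m$ for all large $k$ with $\tau$ independent of $k$, which is the form actually needed in the subsequent argument (via $\mathcal{F}_{(\triangle,f)}(u)=\lim_k\mathcal{F}_{(\triangle,f)}(u^{(k)})\geq\tau m$).
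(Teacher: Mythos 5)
Your proposal is correct and follows essentially the same route as the paper: rewrite $\mathcal{F}_{(\triangle,f)}(u^{(k)})$ as $\int_{\triangle} f^{1-2m}\sum_{i,j}v^{ij}u^{(k)}_{,ij}\,d\mu$ using the solution $v$ of (\ref{weight Abreu eq}), localize to a tube $I\times B$ on which $f^{1-2m}\mathbf{H}^v$ is bounded below, and slice by Fubini into one-dimensional Monge--Amp\`ere masses that stay bounded below on a positive-measure set of slices near the central segment. The only divergence is technical: where the paper asserts that $N^{(k)}_x(I)$ converges to $N_x(I)$ uniformly in $k$ and $x$, you use Fatou together with lower semicontinuity of the mass of open intervals (in both $k$ and the transverse parameter), which is a slightly more careful way of carrying out the same limiting step.
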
	
\begin{proof}
	First assume that $u\in \mathcal{C}_{\infty}$.Then
	\[
	\mathcal{F}_{(\triangle,f)}(u)
	=2\int_{\partial \triangle} u \dfrac{d\sigma}{f^{2m-1}}   
	-\int_{\triangle}s_{(\triangle, \textbf{L}, f)} u \dfrac{d\mu}{f^{2m+1}} 
	=2\int_{\partial \triangle} \dfrac{u}{f^{2m-1}} d \sigma
	+\int_{\triangle} \left( 
	\dfrac{1}{f^{2m-1}}v^{ij} 
	\right)_{,ij}u d \mu.
	\]
	By Lemma \ref{integ by part}, take 
	$\psi=\frac{1}{f^{2m-1}}$, 
	we have 
	\begin{equation}
	\label{Fut if exist solu}
	\mathcal{F}_{(\triangle,f)}(u)
	=\int_{\triangle} \dfrac{v^{i j}}{f^{2m-1}} 
	u_{i j} d \mu.
	\end{equation}
	
	Let $p$ be the midpoint of $I$. For simplicity, we assume that $p$ is the origin, $I$ is on the $x_1$ axis and $I=(-a,a)$. Suppose that there is a Euclidean ball $B:=B_{\epsilon_0}(0)$ in $x_1=0$ plane such that $I\times B \Subset \triangle$. Consider the functions
	\[
	w_{x}(x_{1})=u(x_{1}, x), \quad x_{1} \in I,\ x \in B.
	\]
	For any $x\in B$, $w_x$ are convex functions on $I$. We denote the Monge–Ampere measure on $I$ induced by $w_x$ by $N_x$. Note that $N_0=N$. By the weak convergence of Monge–Ampere measure, we know that there exists a small $B$ such that for any $x\in B$
	\begin{equation}
	N_x(I)\geq m/2.
	\end{equation}
	
	On the other hand, the eigenvalues of $\frac{v^{ij}}{f^{2m-1}}$ are bounded below in $I\times B$ since $v$ is smooth. Let $\delta$ be the lower bound. Then
	\begin{flalign}
	&& \mathcal{F}_{(\triangle,f)}(u) 
	& \geq \int_{I\times B} \dfrac{v^{i j}}{f^{2m-1}} 
	u_{i j} d \mu
	\geq \delta\int_{I\times B} \mathrm{Tr}(u_{ij}) d\mu &\ \\
	&&  &\geq \delta\int_{I\times B} u_{11} d\mu 
	=\delta\int_B N_x(I) dx    
	\geq \dfrac{m\delta}{2} Vol(B). &\
	\end{flalign}
	This completes the proof for $u\in \mathcal{C}_{\infty}$.
	
	Now suppose that $u$ is a limit of a sequence $u^{(k)}\in\mathcal{C}_{\infty}$. Then $u^{(k)}$ converges to $u$ uniformly on $I\times B$. For each $u^{(k)}$ we repeat the above argument: let $w^{(k)}_x, N^{(k)}_x$ replace $w_x$ and $N_x$. Since $w^{(k)}_x$ converges to $w_x$ uniformly with respect to $k$ and $x$, $N^{(k)}_x(I)$ also converges to $N_x(I)$ uniformly with respect to $k$ and $x$. Hence, we conclude that when $k>K$ for some large $K$ and $x\in B$,
	\[
	N^{(k)}_x(I) \geq m/4.
	\]
	Therefore, by the same computation as above, we have
	\[
	\mathcal{F}_{(\triangle,f)}(u^{(k)}) \geq \dfrac{m\delta}{4} Vol(B).
	\]
\end{proof}

\begin{proof}[\textbf{Proof of Thm \ref{uniformly}}]
	If $(\triangle, f)$ is not uniformly K-stable, then there is a sequence of convex function $u^{(k)}$ with the property
	\[
	\int_{\partial\triangle} \dfrac{u^{(k)}}{f^{2m-1}} d\sigma=1/2, 
	\quad \text { and } \quad 
	\lim _{k \rightarrow \infty} \mathcal{F}_{(\triangle,f)}(u^{(k)})=0.
	\]
	In particular
	\begin{equation}
	\label{limit of integ}
	\lim _{k \rightarrow \infty} \int_{\triangle} \dfrac{s_{(\triangle, \textbf{L}, f)}  u^{(k)}}{f^{2m+1}} d \mu=1.
	\end{equation}
	Since $f$ is positive affine linear function on $\triangle$, then the integration 
	$\int_{\partial\triangle} u^{(k)}d\sigma$
	are uniformly bounded. 
	So $u^{(k)}$ locally uniformly converges to a function $u\in \mathcal{C}^K_{*}$ for some $K>0$.
	
	By Lemma \ref{lower bound}, we know that $\lim_{k\rightarrow \infty}\mathcal{F}_{(c,f)}(u^{(k)})=0$ 
	only if for any interior interval 
	$I\Subset \triangle$ the Monge–Ampere measure of $u|_I$ is 0. 
	If this is the case, $u$ must be affine linear. 
	In fact, suppose we normalize $u$ at some point $p$ such that $u\geq u(p)=0$. 
	Now consider any line $l$ through $p$. By the assumption, the Monge–Ampere measure of $u|_l$ is then trivial. 
	Hence $u\equiv0 $ on $l$. This is true for any line, hence $u\equiv0$. 
	In particular
	\[
	\int_{\triangle} \dfrac{s_{(\triangle, \textbf{L}, f)}  u}{f^{2m+1}} d \mu=0 .
	\]
	Since $u^{(k)}$ locally uniformly converges to $u$, we know
	\[
	\lim _{k \rightarrow \infty} \int_{\triangle}  \dfrac{s_{(\triangle, \textbf{L}, f)}  u^{(k)}}{f^{2m+1}} d \mu
	=\int_{\triangle}\dfrac{s_{(\triangle, \textbf{L}, f)} u}{f^{2m+1}}  d \mu. 
	\]
	Then $\lim _{k \rightarrow \infty} \int_{\triangle}\frac{s_{(\triangle, \textbf{L}, f)} u^{(k)}}{f^{2m+1}} d\mu=0$.
	It contradicts to (\ref{limit of integ}).
\end{proof}

\begin{remark}
	The defnition, Lemma and main Theorem \ref{uniformly} as above are also true if we replace $s_{(\triangle, \textbf{L}, f)}$ by a smooth function $A$ on $\triangle$ which satisfies the moment condition: for each affine linear function $\varphi$,
	\[
	2\int_{\partial\triangle}\varphi \dfrac{d\sigma}{f^{2m-1}}
	=\int_{\triangle}A \varphi \dfrac{d\mu}{f^{2m+1}}.
	\]
\end{remark}

Bohui Chen, An-min Li and Li Sheng \cite{Ch-Li-Sh2} show that uniformly K-stability implies existence of solution of Abreu equation on toric K\"{a}hler surface.
Legendre \cite{LE} shows this result for higher dimension.
By these results, we can set weighted conjecture as follows,
\begin{conjecture}
	If ($\triangle, A, f$) is uniformly K-stable for some smooth function $A$ satisfying the moment condition and positive affine function $f$, then there exists a solution of weighted Abreu equation (\ref{weight Abreu eq}) in $\mathcal{S}$ for $A$.
\end{conjecture}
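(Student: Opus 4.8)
The plan is to prove the hard (existence) direction by the variational method that underlies Donaldson's, Chen-Li-Sheng's and Legendre's resolution of the toric Yau-Tian-Donaldson correspondence for cscK metrics, carried out throughout with the weighted measures $d\mu/f^{2m+1}$ on $\triangle$ and $d\sigma/f^{2m-1}$ on $\partial\triangle$ in place of the Lebesgue measures. The governing object is the weighted relative K-energy $\mathcal{E}_{(\triangle,f)}$ on the space $\mathcal{S}$ of symplectic potentials; its first variation vanishes exactly when $u$ satisfies the weighted Abreu equation (\ref{weight Abreu eq}) with right-hand side $A$, so producing a solution in $\mathcal{S}$ is the same as producing a smooth, strictly convex minimizer of $\mathcal{E}_{(\triangle,f)}$ satisfying the Guillemin boundary conditions of Theorem \ref{iff cond}.

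First I would convert the stability hypothesis into an analytic coercivity statement. By Proposition \ref{unif equi proper} (and its extension to a general right-hand side $A$ satisfying the moment condition, exactly as in the Remark following Theorem \ref{uniformly}), uniform K-stability of $(\triangle, A, f)$ is equivalent to properness of $\mathcal{E}_{(\triangle,f)}$: there are constants $\delta > 0$ and $C_\delta$ with $\mathcal{E}_{(\triangle,f)}(u) \geq \delta\,\|\pi(u)\|_b + C_\delta$ for all $u \in \mathcal{S}$. Taking a minimizing sequence $\{u_k\}$ for $\mathcal{E}_{(\triangle,f)}$ on $\widetilde{\mathcal{S}}$, coercivity forces the boundary norms $\|u_k\|_b = \int_{\partial\triangle} u_k\, d\sigma/f^{2m-1}$ to stay uniformly bounded; since $f$ is a positive affine function, bounded above and below on the compact polytope, the unweighted boundary integrals are likewise bounded, and the compactness result Proposition 5.2.6 of \cite{DS} yields a subsequence converging locally uniformly on $\triangle^0$ to a convex limit $u_\infty \in \mathcal{C}_*$, the candidate minimizer.

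The hard part, and the reason the statement is posed as a conjecture, is the regularity theory: one must show that $u_\infty$ is smooth and strictly convex in the interior $\triangle^0$ and that its inverse Hessian $\mathbf{H}^{u_\infty}$ extends smoothly to $\triangle$ with the boundary values prescribed in Theorem \ref{iff cond}, so that $u_\infty \in \mathcal{S}$ and the weak Euler-Lagrange identity upgrades to (\ref{weight Abreu eq}) in the classical sense. Following Chen-Li-Sheng \cite{Ch-Li-Sh2} and Legendre \cite{LE}, the program would be: (i) a uniform interior strict-convexity estimate bounding the smallest eigenvalue of $\mathrm{Hess}(u_k)$ from below on compact subsets of $\triangle^0$; (ii) interior a priori estimates of Monge-Amp\`ere type for $\det \mathrm{Hess}(u_k)$ together with second-derivative control adapted to the fourth-order weighted Abreu operator; and (iii) higher-order estimates obtained by bootstrapping the linearized equation, whose principal part is the weighted Laplacian $-\sum_{i,j}(f^{1-2m} u^{ij} \varphi_{,i})_{,j}$. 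The weight $f$ preserves the ellipticity structure but enters every integration-by-parts identity through the factors $f^{2m+1}$ and $f^{2m-1}$, so one must verify that the boundary terms produced by the weighted Stokes formula of Lemma \ref{integ by part} are consistent, in the limit, with the Guillemin conditions. The decisive obstacle is the interior second-order (full-Hessian) estimate in dimension $m > 2$: this is precisely the step whose unweighted form required the delicate arguments of \cite{Ch-Li-Sh2} and \cite{LE}, and extending it uniformly in the presence of the weight $f$ and an arbitrary smooth $A$ is the essential analytic difficulty that must be overcome to promote the conjecture to a theorem.
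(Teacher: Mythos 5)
You have not proved the statement, and indeed the paper does not either: this is posed as an open conjecture, with no proof given, precisely because the existence direction is not known in the weighted setting. What you have written is the expected variational \emph{program}, and you say as much yourself. The first half is essentially sound: uniform K-stability of $(\triangle, A, f)$ does yield properness of the corresponding energy by the same comparison trick as in Proposition \ref{unif equi proper} (comparing $\mathcal{F}_{(A,f)}$ with $\mathcal{F}_{(A_0,f)}$ for $A_0=s_{u_0,f}$, whose energy is bounded below by Lemma \ref{energy lower bd} since $u_0$ solves its own equation), and coercivity plus Proposition 5.2.6 of \cite{DS} gives a convex limit $u_\infty\in\mathcal{C}_*$ of a minimizing sequence. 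But already here you skip a nontrivial step: to conclude that $u_\infty$ minimizes, you need the functional --- in particular the entropy-type term $-\int_{\triangle}\log\det\mathrm{Hess}(u)\,d\mu/f^{2m-1}$ --- to extend to $\mathcal{C}_*$ and be lower semicontinuous under locally uniform convergence; this extension and lsc property occupy a substantial part of the unweighted theory and are nowhere addressed in your proposal.

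The decisive gap is the one you flag but do not fill: steps (i)--(iii), i.e.\ uniform interior strict convexity, interior estimates for the fourth-order weighted operator, and boundary regularity recovering the Guillemin conditions of Theorem \ref{iff cond}, so that $u_\infty\in\mathcal{S}$ and the Euler--Lagrange identity holds classically. These estimates do not exist in the literature for the weighted Abreu equation, so they cannot be cited, only conjectured. Worse, your fallback on the unweighted template overstates what is known there: \cite{Ch-Li-Sh2} establishes existence only for toric \emph{surfaces} ($m=2$), and \cite{LE} in higher dimensions produces almost K\"ahler (formal) solutions rather than genuine smooth K\"ahler ones, so even the pattern argument you propose to ``extend with the weight $f$'' is incomplete in the generality claimed. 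A correct assessment of your submission is therefore: a reasonable reduction of the conjecture to a weighted regularity theory, together with an honest acknowledgment that this theory is open --- which is exactly why the paper states the result as a conjecture rather than a theorem.
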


\section{Uniform K-stability and properness of weighted relative K-energy}
\label{Uniform K-stability and properness of weighted relative K-energy}
We consider the weighted relative K-energy:
\begin{equation}
\label{wet rela K energy}
\mathcal{E}_{(\triangle,f)}(u)
=\mathcal{F}_{(\triangle,f)}(u)
-\int_{\triangle}(\log\det\mathrm{Hess}(u)-\log\det \mathrm{Hess}(u_0)) \dfrac{d\mu}{f^{2m-1}},
\end{equation}
for all $u\in \mathcal{S}$, where $u_0$ is defined by (\ref{canonical potential}). Using the formula 
$d\log\det A=\mathrm{tr} A^{-1}dA$ for any nondegenerate matrix $A$. 
One can compute the first variation of $\mathcal{E}_{(\triangle,f)}$ at $u$ in the direction of $\dot{u}$, 
\begin{align*}
(d\mathcal{E}_{(\triangle,f)})_u(\dot{u})
=&\mathcal{F}_{(\triangle,f)}(\dot{u})
-\int_{\triangle} \dfrac{1}{f^{2m-1}} 
\sum_{i, j=1}^{m} \mathbf{H}_{i j}^{u} 
\dot{u}_{, i j} d \mu            \\
=&\int_{\triangle}
\left[-\sum_{i, j=1}^{m}\left(\dfrac{1}{f^{2 m-1}} \mathbf{H}_{ij}\right)_{,ij}
-\dfrac{s_{(\triangle, \textbf{L}, f)}}{f^{2 m+1}}\right] \dot{u} d \mu,
\end{align*}
showing that the critical points of $\mathcal{E}_{(\triangle,f)}$ are precisely the solutions of (\ref{weight Abreu eq}). Furthermore,
using $dA^{-1}=-A^{-1}dAA^{-1}$, we see that the second variation of $\mathcal{E}_{(\triangle,f)}$ at $u$ in the directions of $\dot{u}$ and $\dot{v}$ is
\[
(d^2\mathcal{E}_{(\triangle,f)})_u(\dot{u},\dot{v})
=\int_{\triangle} \dfrac{1}{f^{2 m-1}} 
\mathrm{tr}\big(\mathrm{Hess}(u)\mathrm{Hess}(\dot{u}) \mathrm{Hess}(u)\mathrm{Hess}(\dot{v})\big) d \mu , 
\]
showing $\mathcal{E}_{(\triangle,f)}$ is convex. 
Thus we can show
\begin{lemma}
	\label{energy lower bd}
	If (\ref{weight Abreu eq}) admits a solution 
	$u\in \mathcal{S}$,
	then the weighted relative K-energy $\mathcal{E}_{(\triangle,f)}$
	attains its minimum at $u$.
\end{lemma}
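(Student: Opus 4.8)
The plan is to exploit the convexity of $\mathcal{E}_{(\triangle,f)}$ established immediately above, together with the fact that solutions of the weighted Abreu equation are exactly the critical points of $\mathcal{E}_{(\triangle,f)}$. Since the second variation $(d^2\mathcal{E}_{(\triangle,f)})_u(\dot{v},\dot{v})$ is an integral of $\frac{1}{f^{2m-1}}\mathrm{tr}\big(\mathrm{Hess}(u)\mathrm{Hess}(\dot{v})\mathrm{Hess}(u)\mathrm{Hess}(\dot{v})\big)$, and since $f$ is a positive affine function and $\mathrm{Hess}(u)$ is positive definite on $\triangle^0$ for $u\in\mathcal{S}$, this integrand is nonnegative pointwise. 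Hence $\mathcal{E}_{(\triangle,f)}$ is a genuinely convex functional along the affine structure of $\mathcal{S}$.

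First I would fix the solution $u\in\mathcal{S}$ of (\ref{weight Abreu eq}) and take any other $v\in\mathcal{S}$. The natural path joining them is the straight segment $u_t=(1-t)u+tv=u+t(v-u)$ for $t\in[0,1]$; note $u_t\in\mathcal{S}$ for each $t$ since $\mathcal{S}$ is a convex set (the boundary conditions of Theorem \ref{iff cond} and strict convexity are preserved under convex combinations). Setting $\phi(t):=\mathcal{E}_{(\triangle,f)}(u_t)$, I would compute $\phi'(t)=(d\mathcal{E}_{(\triangle,f)})_{u_t}(v-u)$ and $\phi''(t)=(d^2\mathcal{E}_{(\triangle,f)})_{u_t}(v-u,v-u)\geq 0$. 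Because $u$ solves (\ref{weight Abreu eq}), the first-variation computation displayed above shows $(d\mathcal{E}_{(\triangle,f)})_u(\dot u)=0$ for every direction $\dot u$, so $\phi'(0)=0$. Combined with $\phi''\geq 0$, this yields $\phi(1)\geq\phi(0)$, i.e. $\mathcal{E}_{(\triangle,f)}(v)\geq\mathcal{E}_{(\triangle,f)}(u)$, which is exactly the claim that the minimum is attained at $u$.

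The main obstacle I anticipate is justifying the differentiation under the integral sign and the convexity of $\mathcal{S}$ at the level of rigor: one must check that $\phi(t)$ is indeed twice differentiable with the stated derivatives, which requires that $\log\det\mathrm{Hess}(u_t)$ and its derivatives behave integrably near $\partial\triangle$ where $\mathrm{Hess}(u_t)$ degenerates. Since all $u_t$ lie in $\mathcal{S}$ and the boundary behaviour is controlled uniformly in $t$ (the singular part $\frac12\sum L_j\log L_j$ is common to all symplectic potentials, so the differences $\log\det\mathrm{Hess}(u_t)-\log\det\mathrm{Hess}(u_0)$ remain integrable against $d\mu/f^{2m-1}$), these integrals are finite and depend smoothly on $t$. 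One should also verify $\phi'(0)=0$ carefully: the first-variation formula produces the integrand $-\sum_{i,j}(\frac{1}{f^{2m-1}}\mathbf{H}_{ij}^{u})_{,ij}-\frac{s_{(\triangle,\mathbf{L},f)}}{f^{2m+1}}$, which vanishes identically precisely because $u$ satisfies (\ref{weight Abreu eq}), so $\phi'(0)=\int_{\triangle} 0\cdot(v-u)\,d\mu=0$. Once these analytic points are secured, convexity delivers the global minimality at once.
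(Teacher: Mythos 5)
Your proposal is correct and follows essentially the same route as the paper, which simply observes that $\mathcal{E}_{(\triangle,f)}$ is convex on $\mathcal{S}$ and that a solution of (\ref{weight Abreu eq}) is a critical point, hence a global minimum. Your version merely spells out the segment argument and the analytic justifications that the paper leaves implicit.
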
 
\begin{proof}
	We know $\mathcal{E}_{(\triangle,f)}$ is convex on $\mathcal{S}$. The solution $u$ being a critical point of $\mathcal{E}_{(\triangle,f)}$, it is therefore a global minima.
\end{proof}

\begin{proposition}
	\label{unif equi proper}
	For any $\lambda>0$ the following are equivalent:
	\begin{enumerate}[(i)]
		\item \label{uniform K stable}
		$\mathcal{F}_{(\triangle,f)}(u)\geq \lambda\|u\|_b$ for all $u\in \widetilde{\mathcal{C}}$, (i.e. ($\triangle, \mathbf{L}, f$) is uniform K-stable);
		\item \label{coercive} 
		for all $0\leq \delta<\lambda$, there exists $C_{\delta}$ such that 
		$\mathcal{E}_{(\triangle,f)}(u)\geq \delta \|\pi(u)\|_b+C_{\delta}$ for all $u\in \mathcal{S}$, where $\pi(u)$ is the projection of $u$ to the space $\widetilde{\mathcal{S}}:=\widetilde{\mathcal{C}}\cap\mathcal{S}$ (called \textit{properness} of relative K-energy with respect to $\|\cdot\|_b$).
	\end{enumerate}
\end{proposition}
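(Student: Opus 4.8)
The plan is to write $\mathcal{E}_{(\triangle,f)} = \mathcal{F}_{(\triangle,f)} - \mathcal{H}$, where I abbreviate the nonlinear part of (\ref{wet rela K energy}) by
\[
\mathcal{H}(u) := \int_{\triangle}\bigl(\log\det\mathrm{Hess}(u) - \log\det\mathrm{Hess}(u_0)\bigr)\frac{d\mu}{f^{2m-1}},
\]
and then to exploit that both $\mathcal{F}_{(\triangle,f)}$ (vanishing on affine functions) and $\mathcal{H}$ (depending only on $\mathrm{Hess}(u)$) are invariant under $u\mapsto u+(\text{affine})$. Consequently $\mathcal{E}_{(\triangle,f)}(u)=\mathcal{E}_{(\triangle,f)}(\pi(u))$, so throughout it suffices to argue for $u\in\widetilde{\mathcal{S}}=\widetilde{\mathcal{C}}\cap\mathcal{S}$, where $\pi(u)=u$ and $u\geq u(x_0)=0$.

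To prove (i)$\Rightarrow$(ii) the crucial point is an upper bound on the entropy: for every $\epsilon>0$ there is $C_\epsilon$ with $\mathcal{H}(u)\leq \epsilon\|u\|_b+C_\epsilon$ on $\widetilde{\mathcal{S}}$. Setting $B:=\mathbf{H}^{u_0}\mathrm{Hess}(u)=\mathrm{Hess}(u_0)^{-1}\mathrm{Hess}(u)$, whose eigenvalues are positive, the elementary inequality $\log t\leq \epsilon t-1-\log\epsilon$ applied to each eigenvalue gives $\log\det B\leq \epsilon\,\mathrm{tr}(B)+C_\epsilon$ pointwise. Integrating and applying Lemma \ref{integ by part} with $\psi=1/f^{2m-1}$ and $\mathbf{H}=\mathbf{H}^{u_0}$ converts $\int_{\triangle}\mathrm{tr}(B)\frac{d\mu}{f^{2m-1}}=\int_{\triangle}\sum_{i,j}\mathbf{H}^{u_0}_{ij}u_{,ij}\frac{d\mu}{f^{2m-1}}$ into $2\|u\|_b$ plus $-\int_{\triangle}\frac{s_{u_0,f}}{f^{2m+1}}u\,d\mu$ (using (\ref{weight scalar}) to identify $\sum_{i,j}(\mathbf{H}^{u_0}_{ij}/f^{2m-1})_{,ij}=-s_{u_0,f}/f^{2m+1}$); the latter is bounded by $C\|u\|_{L^1}\leq C'\|u\|_b$ via (\ref{bdd by L1}), since $s_{u_0,f}$ is bounded and $u\geq 0$. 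Choosing $\epsilon$ so that $\epsilon C'=\lambda-\delta$ and combining with $\mathcal{F}_{(\triangle,f)}(u)\geq\lambda\|u\|_b$ from (i) yields $\mathcal{E}_{(\triangle,f)}(u)\geq \delta\|u\|_b-C_\epsilon=\delta\|\pi(u)\|_b-C_\epsilon$, which is (ii).

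For (ii)$\Rightarrow$(i) I would use test rays. By the density remark it suffices to prove $\mathcal{F}_{(\triangle,f)}(u)\geq\lambda\|u\|_b$ for $u$ smooth and convex on all of $\triangle$, normalized by $u\geq u(x_0)=0$, the general $u\in\widetilde{\mathcal{C}}$ following by continuity of $\mathcal{F}_{(\triangle,f)}$ and $\|\cdot\|_b$ in the tamed norm. For such $u$ the ray $u_t:=u_0+tu$ lies in $\mathcal{S}$ for all $t\geq 0$, since adding a function smooth up to $\partial\triangle$ preserves the boundary conditions of Theorem \ref{iff cond} carried by $u_0$. By linearity $\mathcal{F}_{(\triangle,f)}(u_t)=\mathcal{F}_{(\triangle,f)}(u_0)+t\mathcal{F}_{(\triangle,f)}(u)$, while $\mathcal{H}(u_t)=\int_{\triangle}\log\det(I+tB)\frac{d\mu}{f^{2m-1}}$ with $B=\mathbf{H}^{u_0}\mathrm{Hess}(u)\geq 0$. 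As $\tfrac{1}{t}\log\det(I+tB)\to 0$ pointwise and is dominated by $\mathrm{tr}(B)\in L^1(\tfrac{d\mu}{f^{2m-1}})$ (finite by the same integration-by-parts identity), dominated convergence gives $\mathcal{H}(u_t)/t\to 0$, hence $\mathcal{E}_{(\triangle,f)}(u_t)/t\to\mathcal{F}_{(\triangle,f)}(u)$. Since $\pi$ is linear and continuous and $\tfrac{1}{t}u_t\to u\in\widetilde{\mathcal{C}}$, also $\tfrac{1}{t}\|\pi(u_t)\|_b\to\|u\|_b$. Dividing the properness inequality $\mathcal{E}_{(\triangle,f)}(u_t)\geq\delta\|\pi(u_t)\|_b+C_\delta$ by $t$ and letting $t\to\infty$ gives $\mathcal{F}_{(\triangle,f)}(u)\geq\delta\|u\|_b$ for every $\delta<\lambda$, whence $\mathcal{F}_{(\triangle,f)}(u)\geq\lambda\|u\|_b$.

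The main obstacle I anticipate is analytic rather than conceptual: making the integration-by-parts identity and the domination argument rigorous up to $\partial\triangle$, where $\mathrm{Hess}(u_0)$ and $\mathrm{Hess}(u)$ blow up and $\mathbf{H}^{u_0}$ degenerates normal to the facets. One must verify that $\int_{\triangle}\mathrm{tr}(\mathbf{H}^{u_0}\mathrm{Hess}(u))\frac{d\mu}{f^{2m-1}}$ is finite and that Lemma \ref{integ by part} may be applied with $\varphi=u$, which in the case $u\in\widetilde{\mathcal{S}}$ is only smooth on $\triangle^0$; this is handled by exhausting $\triangle$ by interior domains, invoking the boundary conditions of Theorem \ref{iff cond} (which force the would-be boundary contributions to collapse exactly to $2\|u\|_b$), and using the approximation lemma for $\mathcal{C}_*^P$ to pass between $\mathcal{C}_\infty$ and the relevant limits.
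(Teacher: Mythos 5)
Your proposal is correct in outline, but it reaches the equivalence by a genuinely different route in both directions, so a comparison is worthwhile. For (i)$\Rightarrow$(ii) the paper never estimates the entropy term $\mathcal{H}$ directly: it introduces the perturbed functional $\mathcal{F}_{(A,f)}$ with $A=s_{u_0,f}$, uses $|\mathcal{F}_{(A,f)}(u)-\mathcal{F}_{(\triangle,f)}(u)|\leq C\|u\|_b$ to solve for $\mathcal{F}_{(\triangle,f)}(u)\geq\varepsilon\mathcal{F}_{(A,f)}(u)+\delta\|u\|_b$, and then invokes Lemma \ref{energy lower bd} (convexity of $\mathcal{E}_{(A,f)}$ with critical point $u_0$) to bound $\mathcal{E}_{(A,f)}$ from below. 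Your pointwise inequality $\log\det B\leq\epsilon\,\mathrm{tr}(B)+C_\epsilon$ combined with Lemma \ref{integ by part} is an equivalent but more elementary and quantitative substitute: note that the case $\epsilon=1$, i.e. $\log\det B\leq\mathrm{tr}(B)-m$, integrates exactly to $\mathcal{H}(u)\leq\mathcal{F}_{(A,f)}(u)-m\int_\triangle\frac{d\mu}{f^{2m-1}}$, which is precisely the content of Lemma \ref{energy lower bd} for $\mathcal{E}_{(A,f)}$; your free parameter $\epsilon$ then replaces the paper's algebraic manipulation with $k$ and $\varepsilon$. For (ii)$\Rightarrow$(i) the paper takes the ray $u+kv$ for a fixed normalized $u\in\mathcal{S}$ and an \emph{arbitrary} $v\in\widetilde{\mathcal{C}}$, asserts $u+kv\in\widetilde{\mathcal{S}}$, and discards the log term because the ratio of determinants is at least one; your version restricts to directions smooth and convex on all of $\triangle$, for which $u_0+tv$ demonstrably satisfies the conditions of Theorem \ref{iff cond}, and recovers general $v\in\widetilde{\mathcal{C}}$ by density together with the continuity statements (\ref{norm converg})--(\ref{Futaki converg}). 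This costs you the dominated-convergence computation showing $\mathcal{H}(u_t)/t\to 0$, but it avoids the paper's unjustified claim that $u+kv$ remains in $\mathcal{S}$ when $v$ is only smooth in the interior with uncontrolled boundary Hessian. Both arguments share the same residual analytic point, which you correctly identify: Lemma \ref{integ by part} is applied with a test function that is merely continuous up to $\partial\triangle$, and a short exhaustion argument should be recorded to justify this.
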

\begin{proof}
	(\ref{uniform K stable})$\Rightarrow$(\ref{coercive}).
	For any smooth bounded function $A$ on $\triangle$, one can define a modified Futaki invariant $\mathcal{F}_{(A,f)}$ by
	\[
	\mathcal{F}_{(A,f)}(u)
	:=2\int_{\partial\triangle}u\dfrac{d\sigma}{f^{2m-1}} 
	-\int_{\triangle}Au\dfrac{d\mu}{f^{2m+1}}.
	\]
	Similarly, one can define a modified weighted relative K-energy $\mathcal{E}_{(A,f)}$ using $\mathcal{F}_{(A,f)}$ instead of $\mathcal{F}_{(\triangle,f)}$ in the formula (\ref{wet rela K energy}).
	
	For any smooth bounded function $A, B$, there is a constant $C=C_{A,B}>0$ with
	\begin{align*}
	\left|\mathcal{F}_{(A,f)}(u)
	-\mathcal{F}_{(B,f)}(u) \right|
	=& \left|-\int_{\triangle}\dfrac{Au}{f^{2m+1}}d\mu 
	+\int_{\triangle}\dfrac{Bu}{f^{2m+1}}d\mu     \right|         \\
	\leq& \int_{\triangle}\dfrac{1}{f^{2m+1}}|A-B| |u| d\mu             \\
	\leq& C_1\|u\|_{L^1} \\
	\leq& C \|u\|_b,
	\end{align*} 
	for all $u\in \widetilde{\mathcal{C}}$, because $\|\cdot\|_b$ bounds the $L^1$ norm on $\widetilde{\mathcal{C}}$.
	Let's take $B=s_{(\triangle, \textbf{L}, f)}$, so that $\mathcal{F}_{(B,f)}=\mathcal{F}_{(\triangle,f)}$, and take $A=s_{u_0,f}$ be the scalar curvature of $(1/f^2)g_{J_0}$, where $J_0$ corresponds to the canonical symplectic potential $u_0\in \mathcal{S}$. Then $u_0$ is trivially solves the equation $s_{u_0,f}=A$.
	
	By assumption,
	\begin{align*}
	\left|\mathcal{F}_{(A,f)}(u)
	-\mathcal{F}_{(\triangle,f)}(u) \right|
	\leq& ((k+1)C-kC) \|u\|_b     \\
	\leq& C(k+1)\lambda^{-1}\lambda\|u\|_b -kC\|u\|_b \\
	\leq& C(k+1)\lambda^{-1}\mathcal{F}_{(\triangle,f)}(u) -kC\|u\|_b,
	\end{align*}
	for all $u\in \widetilde{\mathcal{C}}$.
	This implies
	\[
	\mathcal{F}_{(A,f)}(u)
	\leq (1+C(k+1)\lambda^{-1})\mathcal{F}_{(\triangle,f)}(u) -kC\|u\|_b,
	\] 
	turning this around,
	\[
	\mathcal{F}_{(\triangle,f)}(u)\geq \varepsilon\mathcal{F}_{(A,f)}(u)+\delta\|u\|_b,
	\]
	where $0<\varepsilon:=(1+C\lambda^{-1}(1+k))^{-1}$ for $k$ large enough and 
	$\delta:=kC\lambda(\lambda+C(1+k))^{-1}$, notice  that $\delta$ is an injective function of $k\in [0,\infty)$ with range $[0,\lambda)$.
	
	We know,  
	$\mathcal{F}_{(\triangle,f)}(\varphi)=0$ for all affine function $\varphi$. 
	And from (\ref{Fut if exist solu}), we have, $\mathcal{F}_{(A,f)}(\varphi)=0$ for all affine linear function $\varphi$ since the equation $s_{u_0,f}=A$ admits a trivial solution $u_0$. 
	For any $u\in \mathcal{S}$, by definition, 
	$u=\pi(u)+\varphi$ for some affine function $\varphi$,
	\begin{align*}
	\mathcal{E}_{(\triangle,f)}(u)
	=&\mathcal{F}_{(\triangle,f)}(u)
	-\int_{\triangle}(\log\det\mathrm{Hess}(u)-\log\det \mathrm{Hess}(u_0)) \dfrac{d\mu}{f^{2m-1}}         \\
	=& \mathcal{F}_{(\triangle,f)}(\pi(u))
	-\int_{\triangle}(\log\det \mathrm{Hess}(u)-\log\det \mathrm{Hess}(u_0)) \dfrac{d\mu}{f^{2m-1}}        \\
	\geq& \varepsilon\mathcal{F}_{(A,f)}(u)+ \delta\|\pi(u)\|_b-
	\int_{\triangle}(\log\det\mathrm{Hess}(u)-\log\det \mathrm{Hess}(u_0)) \dfrac{d\mu}{f^{2m-1}}         \\
	=&\mathcal{E}_{(A,f)}(\varepsilon u) -m\log\varepsilon\int_{\triangle}\dfrac{d\mu}{f^{2m-1}} 
	+\delta\|\pi(u)\|_b                 \\
	\geq&\mathcal{E}_{(A,f)}(\varepsilon u) 
	+\delta\|\pi(u)\|_b.
	\end{align*}
	It is shown in Lemma \ref{energy lower bd} that $\mathcal{E}_{(A,f)}$ is bounded from below on the space $\mathcal{S}$. Letting $C_{\delta}$ be a lower bound of $\mathcal{E}_{(A,f)}$, the claim follows.
	
	(\ref{coercive})$\Rightarrow$(\ref{uniform K stable}). 
	Suppose $\mathcal{E}_{(\triangle,f)}(u)\geq \delta \|u\|_b+C_{\delta}$ for all normalized $u\in \mathcal{S}$. 
	We shall fix one such $u$. 
	Then for all $k>0$ and $v\in\widetilde{\mathcal{C}}$, $u+kv\in\widetilde{\mathcal{S}}$
	and so 
	$\mathcal{E}_{(\triangle,f)}(u+kv)\geq \delta \|u+kv\|_b+C_{\delta}$. We have
	\begin{align*}
	k \mathcal{F}_{(\triangle, f)}(v)
	=&\mathcal{E}_{(\triangle, f)}(u+k v) -\mathcal{E}_{(\triangle, f)}(u)
	+\int_{\triangle} \log \left(\dfrac{\det \mathrm{Hess}(u+k v)}{\det \mathrm{Hess}(u)}\right) \dfrac{d\mu}{f^{2m-1}}           \\
	\geq&\delta \|u+kv\|_b+C_{\delta} +\int_{\triangle} \log \left(\dfrac{\det \mathrm{Hess}(u+k v)}{\det \mathrm{Hess}(u)}\right) \dfrac{d\mu}{f^{2m-1}}  -\mathcal{E}_{(\triangle, f)}(u)          \\
	\geq&\delta \|u+kv\|_b+\tilde{C}_{\delta},
	\end{align*}
	where $\tilde{C}_{\delta}=C_{\delta}- \mathcal{E}_{(\triangle, f)}(u)$
	(for the fixed normalized $u\in\mathcal{S}$), since the ratio of the
	determinants is at least one for $k$ sufficiently large. Dividing by $k$ and letting
	$k\rightarrow\infty$ we obtain $\mathcal{F}_{(\triangle,f)}(v) \geq \delta\|v\|_b$.
	Since this is true for all $0\leq \delta<\lambda$, we have that $\mathcal{F}_{(\triangle,f)}(v) \geq \lambda\|v\|_b$ for all $v\in \widetilde{\mathcal{C}}$.
\end{proof}

\begin{proposition}
	If $v\in\mathcal{S}$ is the solution of the weighted Abreu equation (\ref{weight Abreu eq}), then there exists a constant $C$ depending on $\lambda$ such that
	\[
	\|v\|_b\leq C.
	\]
\end{proposition}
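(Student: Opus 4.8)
The plan is to assemble the three results already in hand: the implication ``solution $\Rightarrow$ uniform K-stability'' (Theorem \ref{uniformly}), the equivalence ``uniform K-stability $\Leftrightarrow$ properness'' (Proposition \ref{unif equi proper}), and the fact that a solution is a global minimizer of the weighted relative K-energy (Lemma \ref{energy lower bd}). Since $v\in\mathcal{S}$ solves the weighted Abreu equation, Theorem \ref{uniformly} furnishes a constant $\lambda>0$ with $\mathcal{F}_{(\triangle,f)}(u)\geq\lambda\|u\|_b$ for all $u\in\widetilde{\mathcal{C}}$; this is the $\lambda$ appearing in the statement, and the final constant $C$ will be expressed in terms of it.

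First I would fix any $\delta$ with $0\leq\delta<\lambda$, for concreteness $\delta=\lambda/2$. By Proposition \ref{unif equi proper}(\ref{coercive}) there is a constant $C_\delta$, depending on $\lambda$ and the fixed data $\triangle,\mathbf{L},f$, such that $\mathcal{E}_{(\triangle,f)}(u)\geq\delta\|\pi(u)\|_b+C_\delta$ for every $u\in\mathcal{S}$. Applying this inequality to $u=v$ produces a lower bound for $\mathcal{E}_{(\triangle,f)}(v)$ in terms of $\|\pi(v)\|_b$.

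Next I would obtain a matching upper bound from minimality. Because the weighted Abreu equation constrains only $\mathrm{Hess}(u)$, both $\mathcal{F}_{(\triangle,f)}$ and the $\log\det$ term are unchanged when an affine function is added, so $\mathcal{E}_{(\triangle,f)}$ is invariant under the $\mathcal{G}$-action; we may therefore take $v=\pi(v)\in\widetilde{\mathcal{S}}$, whence $\|v\|_b=\|\pi(v)\|_b$ is literally the boundary norm. By Lemma \ref{energy lower bd}, $v$ minimizes $\mathcal{E}_{(\triangle,f)}$ over $\mathcal{S}$; evaluating at the canonical potential $u_0\in\mathcal{S}$ and noting that the two $\log\det$ terms in (\ref{wet rela K energy}) cancel gives $\mathcal{E}_{(\triangle,f)}(v)\leq\mathcal{E}_{(\triangle,f)}(u_0)=\mathcal{F}_{(\triangle,f)}(u_0)$, a finite constant fixed by $(\triangle,\mathbf{L},f)$ (here $u_0=\frac12\sum_j L_j\log L_j$ is bounded on $\triangle$, so the defining integrals converge). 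Combining the two bounds yields $\delta\|v\|_b+C_\delta\leq\mathcal{F}_{(\triangle,f)}(u_0)$, and rearranging gives $\|v\|_b\leq\delta^{-1}\bigl(\mathcal{F}_{(\triangle,f)}(u_0)-C_\delta\bigr)=:C$, which depends on $\lambda$ through $\delta$ and $C_\delta$ but not on $v$.

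Since the cited results do all the analytic work, I do not expect a serious obstacle; the argument is essentially bookkeeping. The two points that need care are the normalization of $v$ (ensuring $\|v\|_b$ is the genuine boundary norm and coincides with $\|\pi(v)\|_b$, using the affine-invariance of $\mathcal{E}_{(\triangle,f)}$) and the verification that $\mathcal{E}_{(\triangle,f)}(u_0)$ is finite and independent of the particular solution, which together guarantee that the resulting $C$ depends only on $\lambda$ and the fixed polytope data.
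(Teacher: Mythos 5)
Your argument is correct, but it takes a genuinely different route from the paper. The paper's proof is a two-line computation: substituting the solution $v$ itself into the identity (\ref{Fut if exist solu}), $\mathcal{F}_{(\triangle,f)}(u)=\int_{\triangle}\frac{v^{ij}}{f^{2m-1}}u_{,ij}\,d\mu$, gives $\sum_{i,j}v^{ij}v_{,ij}=\mathrm{tr}(\mathbf{H}^v\,\mathrm{Hess}(v))=m$, hence $\mathcal{F}_{(\triangle,f)}(v)=m\int_{\triangle}\frac{d\mu}{f^{2m-1}}=:C_1$ is an explicit constant of the polytope data, and uniform K-stability (Theorem \ref{uniformly}) immediately yields $\|v\|_b\leq\lambda^{-1}C_1$. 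You instead route through the energy: properness (Proposition \ref{unif equi proper}) for the lower bound $\mathcal{E}_{(\triangle,f)}(v)\geq\delta\|\pi(v)\|_b+C_\delta$, and minimality of the solution (Lemma \ref{energy lower bd}) tested against $u_0$ for the upper bound $\mathcal{E}_{(\triangle,f)}(v)\leq\mathcal{F}_{(\triangle,f)}(u_0)$. Both are sound; your observations that $\mathcal{E}_{(\triangle,f)}$ is affine-invariant (so one may assume $v=\pi(v)$) and that $\mathcal{E}_{(\triangle,f)}(u_0)=\mathcal{F}_{(\triangle,f)}(u_0)$ is finite are exactly the points that need checking, and they hold. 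The paper's approach buys an explicit, cleaner constant $C=m\lambda^{-1}\int_{\triangle}f^{1-2m}\,d\mu$ and avoids invoking the heavier Proposition \ref{unif equi proper} (whose proof has its own subtleties, e.g.\ the scaling $\varepsilon u$); your approach is more modular and would survive in settings where the trace identity $\sum v^{ij}v_{,ij}=m$ is not available, at the cost of a less explicit constant inherited from $C_\delta$.
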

\begin{proof}
	Note that by (\ref{Fut if exist solu}), 
	$\mathcal{F}_{(\triangle, f)}(v)
	=n\int_{\triangle}\frac{dx}{f^{2m-1}}=:C_1$.
	Then by the uniform K-stability, we have
	\[
	\|u\|_b\leq \lambda^{-1} \mathcal{F}_{(\triangle, f)}(v)
	=\lambda^{-1}C_1.
	\]
\end{proof}


	
\bigskip
\address{ 
	Academy of Mathematics\\
	and System Science, \\
	Chinese Academy of Sciences \\
	Beijing 100190 \\
	China
}
{jiuyaxiong17@mails.ucas.ac.cn}
\end{document}